\documentclass[11pt]{article}

\usepackage[a4paper,margin=1in]{geometry}
\usepackage{amsmath,amssymb,amsthm,mathtools}
\usepackage{array}
\usepackage{bm}
\usepackage{enumitem}
\usepackage{hyperref}
\usepackage{aliascnt}
\usepackage{tikz}
\usetikzlibrary{arrows.meta,positioning}

\hypersetup{
  colorlinks=true,
  linkcolor=blue!55!black,
  citecolor=green!45!black,
  urlcolor=blue!55!black,
  pdftitle={Residual neural networks overcome the curse of dimensionality for semilinear heat equations},
  pdfauthor={Ilkhom Mukhammadiev and Diyora Salimova},
  pdfsubject={Residual-network approximation of high-dimensional semilinear heat equations},
  pdfkeywords={residual neural networks, curse of dimensionality, semilinear heat equations, multilevel Picard approximations, neural-network expressivity}
}

\theoremstyle{plain}
\newtheorem{theorem}{Theorem}[section]

\newaliascnt{proposition}{theorem}
\newtheorem{proposition}[proposition]{Proposition}
\aliascntresetthe{proposition}

\newaliascnt{lemma}{theorem}
\newtheorem{lemma}[lemma]{Lemma}
\aliascntresetthe{lemma}

\newaliascnt{corollary}{theorem}
\newtheorem{corollary}[corollary]{Corollary}
\aliascntresetthe{corollary}

\theoremstyle{definition}
\newaliascnt{definition}{theorem}
\newtheorem{definition}[definition]{Definition}
\aliascntresetthe{definition}

\newaliascnt{setting}{theorem}
\newtheorem{setting}[setting]{Setting}
\aliascntresetthe{setting}

\newaliascnt{remark}{theorem}
\newtheorem{remark}[remark]{Remark}
\aliascntresetthe{remark}

\newaliascnt{assumption}{theorem}

\aliascntresetthe{assumption}

\usepackage{cleveref}
\crefname{theorem}{Theorem}{Theorems}
\Crefname{theorem}{Theorem}{Theorems}
\crefname{proposition}{Proposition}{Propositions}
\Crefname{proposition}{Proposition}{Propositions}
\crefname{lemma}{Lemma}{Lemmas}
\Crefname{lemma}{Lemma}{Lemmas}
\crefname{corollary}{Corollary}{Corollaries}
\Crefname{corollary}{Corollary}{Corollaries}
\crefname{definition}{Definition}{Definitions}
\Crefname{definition}{Definition}{Definitions}
\crefname{setting}{Setting}{Settings}
\Crefname{setting}{Setting}{Settings}
\crefname{remark}{Remark}{Remarks}
\Crefname{remark}{Remark}{Remarks}
\crefname{assumption}{Assumption}{Assumptions}
\Crefname{assumption}{Assumption}{Assumptions}

\newcommand{\R}{\mathbb{R}}
\newcommand{\N}{\mathbb{N}}
\newcommand{\Z}{\mathbb{Z}}
\newcommand{\E}{\mathbb{E}}
\newcommand{\PR}{\mathbb{P}}
\newcommand{\norm}[1]{\left\lVert#1\right\rVert}
\newcommand{\abs}[1]{\left\lvert#1\right\rvert}
\newcommand{\cR}{\mathcal{R}}
\newcommand{\cD}{\mathcal{D}}
\newcommand{\cP}{\mathcal{P}}
\newcommand{\cF}{\mathcal{F}}
\newcommand{\cU}{\mathcal{U}}
\newcommand{\bN}{\mathbf{N}}        
\newcommand{\bbN}{\bm{\mathcal{N}}} 
\newcommand{\Concat}{\ast}
\newcommand{\Lip}{\mathrm{Lip}}
\newcommand{\id}{\mathrm{Id}}
\newcommand{\Real}{\mathfrak{R}}
\newcommand{\Param}{\mathfrak{P}}
\newcommand{\Length}{\mathfrak{L}}

\title{\bfseries Residual neural networks overcome the curse of dimensionality for
semilinear heat equations}

\author{Ilkhom Mukhammadiev$^{1}$ and Diyora Salimova$^{2}$
	\bigskip\\
	\small{$^1$ Department of Applied Mathematics, University of Freiburg,}\vspace{-0.1cm}\\
	\small{Germany; e-mail: \href{mailto:ilkhom.mukhammadiev@mathematik.uni-freiburg.de}{\texttt{ilkhom.mukhammadiev@mathematik.uni-freiburg.de}}}\smallskip\\
	\small{$^2$ Department of Applied Mathematics, University of Freiburg,}\vspace{-0.1cm}\\
	\small{Germany; e-mail: \href{mailto:diyora.salimova@mathematik.uni-freiburg.de}{\texttt{diyora.salimova@mathematik.uni-freiburg.de}}}}
\date{\today}

\begin{document}
\maketitle
\vspace{-3em}

\begin{abstract}
\noindent
Rigorous results show that feedforward neural networks can overcome the curse of dimensionality
in the numerical approximation of high-dimensional partial differential equations (PDEs), but
comparatively little is known about residual neural networks (ResNets) in the nonlinear PDE
setting. We prove that ResNets overcome the curse of dimensionality in the numerical
approximation of solutions of semilinear heat equations with globally Lipschitz continuous,
gradient-independent nonlinearities: under polynomial growth and network approximability
hypotheses on the PDE data, there exist $\eta\in(0,\infty)$ and ResNets $\Psi_{d,\varepsilon}$,
$d\in\N$, $\varepsilon\in(0,1]$, with at most $\eta d^{\eta}\varepsilon^{-\eta}$ parameters
whose realizations approximate the solution in dimension $d$ with an $L^2$-error of at most
$\varepsilon$. The proof represents one deterministic realization of a multilevel Picard
estimator by a ResNet whose shortcut connections transmit the spatial variable and a scalar
accumulator, while the residual branches successively add the summands of the estimator. For
ridge-sum initial conditions, admissible sigmoidal activations, and globally Lipschitz
truncations of the nonlinearity, we obtain, for every $\xi>0$, the explicit bound
$C_\xi d^{4+\xi}\varepsilon^{-(3+\xi)}$ on the number of parameters.
\end{abstract}

\section{Introduction}\label{sec:intro}

\subsection{The problem and the main result}

Developing numerical methods for high-dimensional partial differential equations (PDEs) that avoid
the curse of dimensionality is among the central challenges of numerical analysis. Fix $T\in(0,\infty)$, a globally Lipschitz continuous function $f\colon\R\to\R$, and, for every
$d\in\N$, an initial value $g_d\colon\R^d\to\R$. We study residual neural network (ResNet) approximations of the solutions to the
semilinear heat equations
\begin{equation}\label{eq:pde}
  (\tfrac{\partial}{\partial t}u_d)(t,x)
  = (\tfrac12\Delta_x u_d)(t,x) + f\big(u_d(t,x)\big),
  \qquad u_d(0,x)=g_d(x),
\end{equation}
for $(t,x)\in(0,T)\times\R^d$. The curse of dimensionality refers here to the fact that, for
general high-dimensional approximation problems, the number of degrees of freedom employed by
standard approximation methods may grow exponentially in the PDE dimension $d$ and/or
in the reciprocal of the prescribed approximation accuracy (cf., e.g.,
Bellman~\cite{Bellman1957}, Novak \& Wo\'zniakowski~\cite{NovakWozniakowski2008}, and the
references mentioned therein).

The family of PDEs in~\eqref{eq:pde} includes, for example, equations with a globally Lipschitz
truncation of the Allen--Cahn nonlinearity $f(u)=u-u^3$; the untruncated cubic is not globally
Lipschitz continuous and is therefore not covered directly by our hypotheses (see
\Cref{sec:advantages} below). Additionally, a suitable globally Lipschitz truncation of the logistic reaction $r\mapsto r(1-r)$
 leads to a Fisher--KPP-type equation. After a time reversal, equations of the
form~\eqref{eq:pde} arise in nonlinear pricing, while more general semilinear Kolmogorov PDEs
appear in stochastic optimal control. In such applications the PDE dimension $d$ may be quite large.

The key contribution of this article is to prove that residual neural networks (ResNets) in the
sense of He et al.~\cite{HeZRS2016,HeZRS2016identity} approximate the solutions of~\eqref{eq:pde}
without the curse of dimensionality. More specifically, the main result of this article,
\Cref{thm:main} below, proves that, under suitable Lipschitz continuity, polynomial growth, and
network approximation hypotheses, there exist $\eta\in(0,\infty)$ and ResNets
$\Psi_{d,\varepsilon}$, $d\in\N$,
$\varepsilon\in(0,1]$, such that $\Real_a(\Psi_{d,\varepsilon})\in C(\R^d,\R)$ and
\[
   \Param(\Psi_{d,\varepsilon})\le \eta\,d^{\eta}\,\varepsilon^{-\eta}
   \qquad\text{and}\qquad
   \Big[\textstyle\int_{[0,1]^d}\abs{u_d(T,x)-(\Real_a(\Psi_{d,\varepsilon}))(x)}^2\,dx\Big]^{1/2}\le \varepsilon,
\]
where $\Param$ denotes the number of parameters of a ResNet and $\Real_a$ denotes its
realization with respect to the activation function $a\colon\R\to\R$.

Since the real number $\eta$ depends neither on $d$ nor on $\varepsilon$, the number of parameters
used to describe the ResNets $\Psi_{d,\varepsilon}$ grows at most polynomially in both the PDE
dimension $d$ and the reciprocal $\varepsilon^{-1}$ of the prescribed approximation accuracy. In
this sense, the ResNets $\Psi_{d,\varepsilon}$ overcome the curse of dimensionality in the
numerical approximation of the PDEs in~\eqref{eq:pde}. We emphasize that, as in the related
results in the scientific literature, \Cref{thm:main} is a deterministic expressivity result. It
establishes the existence of a ResNet with the stated approximation properties, but it does not
assert that a prescribed training procedure computes such a ResNet.

\subsection{Deep learning for high-dimensional PDEs and rigorous approximation results}\label{sec:rigorous}

In recent years, neural networks have been successfully employed for the numerical approximation
of solutions to high-dimensional PDEs. For example, the deep BSDE method of E et
al.~\cite{EHanJentzen2017,HanJentzenE2018} reformulates a semilinear parabolic PDE as a backward
stochastic differential equation and approximates its solution by means of stochastic gradient
descent in dimensions of order $100$. Further deep-learning-based approximation methods for PDEs
include, for instance, the physics-informed neural networks of Raissi et
al.~\cite{RaissiPerdikarisKarniadakis2019}, the deep Galerkin method of Sirignano \&
Spiliopoulos~\cite{SirignanoSpiliopoulos2018}, and the deep Ritz method of E \&
Yu~\cite{EYu2018}. Numerical studies for Kolmogorov equations can, e.g., be found in Beck et
al.~\cite{BeckBeckerGrohs2021}, and we refer to~\cite{BeckHutzenthalerJentzenKuckuck2023} for an
overview. These numerical simulations indicate that neural networks possess the flexibility
required to approximate solutions of certain high-dimensional PDEs. They do not, however, by
themselves establish polynomial bounds for the number of parameters used to describe the
approximating networks.

There are now several rigorous results in the scientific literature which prove that neural
networks possess the requisite expressive power for various classes of PDEs. The first results of
this type concerned linear equations: polynomial parameter bounds for Black--Scholes and
Kolmogorov equations~\cite{GHJV2018,ElbrachterGrohsJentzenSchwab2022,JentzenSalimovaWelti2021},
including the generalization error~\cite{BernerGrohsJentzen2020}, space--time
approximations~\cite{HornungJentzenSalimova2024}, option prices under exponential L\'evy
models~\cite{GononSchwab2021}, and the Poisson equation with Dirichlet boundary
conditions~\cite{GrohsHerrmann2020}; Reisinger \& Zhang~\cite{ReisingerZhang2020} treat nonsmooth
value functions of zero-sum games of nonlinear stiff systems.

For nonlinear equations, one of the principal tools is the class of multilevel Picard (MLP)
approximations introduced by E et al.~\cite{EHJK2019,EHJK2021smooth} and further developed
in~\cite{HJKNW2018,HutzenthalerKruse2020grad,BHHJK2019AllenCahn,BeckerBraunwarth2020,
HutzenthalerJentzenKruseNguyen2020fcm}. MLP approximations are full-history recursive nonlinear
Monte Carlo schemes which, for the problem classes and assumptions treated in these works,
approximate solutions of semilinear parabolic PDEs at a computational cost growing at most
polynomially in the PDE dimension $d$ and the reciprocal $\varepsilon^{-1}$ of the prescribed
approximation accuracy. A principal analytic ingredient of
the present article is the result of Hutzenthaler et al.~\cite{HJKN2020}. The authors prove that
rectified feedforward neural networks approximate solutions of semilinear heat equations of the
form~\eqref{eq:pde} without the curse of dimensionality by showing that one realization of the
MLP estimator can be represented exactly by a ReLU network.
Truncated MLP
approximations for the Allen--Cahn equation are developed in~\cite{BHHJK2019AllenCahn}, and gradient-dependent
nonlinearities in the MLP setting are treated by Hutzenthaler \& Kruse~\cite{HutzenthalerKruse2020grad}; the latter extends MLP convergence to nonlinearities
$f=f(u,\nabla u)$ through a stochastic fixed-point system involving the Bismut--Elworthy--Li
formula. The corresponding network-representation theorem for gradient-dependent
semilinear heat equations in the feedforward setting was obtained by Neufeld \& Nguyen~\cite{NeufeldNguyen2024}; analogous results for general semilinear PDEs with gradient-dependent
nonlinearities and nonconstant coefficients are due to Neufeld et al.~\cite{NeufeldNguyenWu2023} at the MLP level.

The class of PDEs covered by the network-representation theorems has been extended in several
directions: to general semilinear Kolmogorov PDEs with gradient-independent Lipschitz
nonlinearities and Lipschitz drift and diffusion coefficients by Cioica-Licht et
al.~\cite{CioicaHutzenthalerWerner2022}; to $L^p$-errors for arbitrary $p\in(0,\infty)$ and to
further activations, including ReLU, leaky ReLU, and softplus, by Ackermann et
al.~\cite{AckermannJentzenKruseKuckuckPadgett2023} and Neufeld \&
Nguyen~\cite{NeufeldNguyen2026MLP}, and to space--time domains
in~\cite{AckermannJentzenKuckuckPadgett2024}; to partial integro-differential equations (PIDEs)
for jump processes, in the linear case by Gonon \& Schwab~\cite{GononSchwab2021PIDE} and in the
semilinear case by Neufeld \& Wu~\cite{NeufeldWu2022} via an MLP scheme, with the corresponding
expressivity result in~\cite{NeufeldNguyenWu2023PIDE} and a random deep splitting method
in~\cite{NeufeldSchmockerWu2024}; to random feature networks for Black--Scholes and exponential
L\'evy models by Gonon~\cite{Gonon2021Random}; and to $L^\infty$-approximation for the linear heat
equation by Gonon et al.~\cite{GononGrohsJentzenKoflerSiska2022}.

The upper bounds described above are complemented by lower bounds which identify function
classes for which the curse of dimensionality cannot be overcome by network approximations with a given
architecture. Grohs et al.~\cite{GrohsIbragimovJentzenKoppensteiner2023}
exhibit a class of high-dimensional target functions for which sufficiently deep networks overcome
the curse of dimensionality but shallow networks (in particular, single-hidden-layer networks) do
not; the depth required for the polynomial parameter bound grows in the dimension. The present
result is consistent with this lower bound, since the ResNets $\Psi_{d,\varepsilon}$ produced by
\Cref{thm:main} are not of uniformly bounded depth. More specifically, the accumulator
construction follows the finite MLP recursion tree, whose size grows with $d$ and
$\varepsilon^{-1}$ through the balancing in \Cref{lem:mlp-cost} below. A further sampling-complexity
lower bound of Grohs \& Voigtl\"ander~\cite{GrohsVoigtlaender2024} shows that, on suitable
neural-network approximation spaces, favorable approximation rates cannot in general be realized
by deterministic or randomized algorithms using only polynomially many point samples; in the
uniform norm, the worst-case sampling complexity suffers from the curse of dimensionality. This
result does not contradict our
upper bound, because \Cref{thm:main} is an expressivity result and not a learning result. It does,
however, show that the polynomial existence guarantee in \Cref{thm:main} alone does not imply the
existence of a polynomial-time training algorithm.

The classical expressivity theory for feedforward
networks~\cite{HornikStinchcombeWhite1989,Cybenko1989,LLPS1993,Mhaskar1996,Pinkus1999} and the
quantitative ReLU rate bounds of~\cite{Yarotsky2017,PetersenVoigtlaender2018} provide the general
background for the staircase construction in \Cref{lem:1d-approx}.

A recent result of Yang \& Pan~\cite{YangPan2026} studies ResNet approximations of
high-dimensional stochastic PDEs by means of a splitting-up method and repeated cross-iteration of
two ResNets; its stochastic-PDE setting and network construction differ from the deterministic
MLP accumulator developed here. To the best of our knowledge, the present article provides the
first expressivity result for
residual networks, with parameter counts polynomial in $d$ and $\varepsilon^{-1}$, for the
semilinear heat equations~\eqref{eq:pde}. The feedforward results in~\cite{HJKN2020} and
\cite{AckermannJentzenKruseKuckuckPadgett2023} treat the same gradient-independent semilinear heat
equation, but they require an activation function which represents the identity (such as ReLU,
leaky ReLU, or softplus). The residual construction developed below removes this requirement and
yields explicit polynomial exponents. In comparison,
\cite{NeufeldNguyen2024} treats the more general gradient-dependent nonlinearity
$f(u,\nabla u)$ in the feedforward setting, whereas the present article considers a
gradient-independent nonlinearity and a residual architecture with a different class of admissible
activation functions, including bounded monotone sigmoidal activations.

\subsection{Residual networks and their approximation theory}\label{sec:resnets}

Residual neural networks were introduced by He et
al.~\cite{HeZRS2016,HeZRS2016identity} and are characterized by the presence of shortcut
connections. More specifically, in the case of an identity shortcut, the $i$th residual update
has the form
\[
   x_i=x_{i-1}+F_i(x_{i-1}),
\]
where $F_i$ is the realization of the $i$th residual branch; the recurrence can be interpreted as
one step of an explicit Euler discretization of an ordinary differential equation, an
interpretation developed in~\cite{E2017,HaberRuthotto2017,LuZhongLiDong2018,ChenRubanova2018}. Rigorous approximation results for ResNets include the universal
approximation theorem of Lin \& Jegelka~\cite{LinJegelka2018} and the control-theoretic analysis
of Tabuada \& Gharesifard~\cite{TabuadaGharesifard2023}. A principal architectural ingredient in
this article is the result of Baggenstos \& Salimova~\cite{BaggenstosSalimova2023}, in which the
skip-connection recurrence is identified with an Euler--Maruyama discretization and ResNets are
shown to overcome the curse of dimensionality for a class of linear Kolmogorov PDEs.

In this article we employ the residual architecture to construct a ResNet which successively
accumulates the summands occurring in an MLP approximation. More specifically, each shortcut
transmits the spatial variable together with the current value of a scalar accumulator, while each
accumulator update adds one summand of the MLP formula; depending on the summand, an update
consists of a single residual block or a short concatenation of such blocks. The composition rule in
\Cref{lem:resnet-comp}(ii) then implies that the parameter count of the resulting ResNet grows
additively under concatenation. This construction requires neither an FNN representation of
the identity nor a parallelization of networks with different widths. Consequently, in contrast
to the feedforward constructions as in~\cite{HJKN2020}, we do not impose the hypothesis that the
activation function represents the identity. Under the abstract surrogate hypothesis in
\Cref{set:nn}, part~\textup{(i)} of \Cref{thm:main} applies to every activation for which the required data surrogates
exist. The explicit constructions in \Cref{cor:trunc-surrogate,lem:ridge-surrogate} show that the
admissible activations in part~\textup{(ii)} include the admissible sigmoidal activations of
\Cref{lem:1d-approx}, such as the logistic function and $\tanh$.

Roughly speaking, our proof of \Cref{thm:main} consists of the following steps. First, we
employ the stability and complexity estimates for MLP approximations in
\Cref{cor:mlp-stab,lem:mlp-cost} to approximate the solution of~\eqref{eq:pde} by an MLP estimator
built from neural-network surrogates of the PDE data. This allows us to select one deterministic realization of this estimator
with the desired error. Second, the accumulator construction in \Cref{prop:repr} represents this
realization by a ResNet, and the recursion in \Cref{lem:prec} proves the required polynomial bound
for the number of parameters. In addition, \Cref{lem:ridge-surrogate} provides a concrete class of
ridge-sum initial conditions which satisfies the abstract data hypotheses.

\section{A calculus of feedforward and residual networks}\label{sec:calculus}

In this section we recall, following~\cite{BaggenstosSalimova2023,JentzenSalimovaWelti2021}, the
formalism for feedforward and residual networks used throughout this article and the auxiliary
results needed below, notably the additivity of ResNet composition with respect to the number of
parameters (\Cref{lem:resnet-comp}).

\begin{samepage}
\paragraph{Notation conventions.}
The following symbols are used throughout \Cref{sec:calculus} and the subsequent sections.
{\small
\begin{center}
\renewcommand{\arraystretch}{1.18}
\begin{tabular}{@{}lp{11.0cm}@{}}
\hline
$\N_0$ & $\N \cup \{0\}$ \\
$\norm{\cdot}$ & the Euclidean norm; $\norm{x}=(\sum_{i=1}^d x_i^2)^{1/2}$ for $x \in \R^d$\\
$\id_n$ & the $n\times n$ identity matrix \\
$\bN$ & the set of feedforward neural networks (FNNs); see \Cref{def:fnn}\\
$\bbN=\bigcup_{n\in\N}\bbN_n$ & the set of residual neural networks (ResNets); $\bbN_n$ is the subset of ResNets of length $n$; see \Cref{def:resnet}\\
$\theta$ & a generic FNN, $\theta=((W_1,B_1),\dots,(W_L,B_L))\in\bN$\\
$\Theta$ & a generic ResNet, $\Theta=(\Gamma_1,\theta_1,\dots,\Gamma_n,\theta_n)\in\bbN$\\
$\mathcal L(\theta)$ & the depth of an FNN; see \Cref{def:fnn}\\
$\theta_i$ & the $i$th residual branch (an FNN) of a ResNet\\
$(\Gamma_i,\theta_i)$ & the $i$th residual block of a ResNet\\
$\Gamma_i$ & the $i$th shortcut matrix of a ResNet\\
$\mathcal{I}(\cdot),\mathcal{O}(\cdot)$ & input and output dimensions (of an FNN or a ResNet); see \Cref{def:fnn,def:resnet}\\
$\cD(\cdot)$ & the architecture (of an FNN or a ResNet); see \Cref{def:fnn,def:resnet}\\
$\Length(\Theta)$ & the length of a ResNet, i.e.\ the number of residual blocks; see \Cref{def:resnet}\\
$\cP(\theta)$ & the parameter count of an FNN; see \eqref{eq:fnn_p}\\
$\Param(\Theta)$ & the parameter count of a ResNet; see~\eqref{eq:resnet-P}\\
$\cR_a(\theta)$ & the realization of an FNN $\theta$ with activation $a$; see \Cref{def:fnn}\\
$\Real_a(\Theta)$ & the realization of a ResNet $\Theta$ with activation $a$; see \Cref{def:resnet-real}\\
$\Theta_2\Concat\Theta_1$ & the concatenation of two ResNets; see \Cref{lem:resnet-comp}\\
$\mathcal{M}_a$ & the componentwise action of $a$, i.e.\ $\mathcal{M}_a(x_1,\dots,x_n)=(a(x_1),\dots,a(x_n))$\\
\hline
\end{tabular}
\end{center}}
\end{samepage}

\subsection{Feedforward neural networks (FNNs)}

\begin{definition}[FNNs]\label{def:fnn}
Let
\[
  \bN=\bigcup_{L\in\N}\ \bigcup_{l_0,l_1,\dots,l_L\in\N}
  \Big(\bigtimes_{k=1}^{L}\big(\R^{l_k\times l_{k-1}}\times\R^{l_k}\big)\Big).
\]
For $\theta=((W_1,B_1),\dots,(W_L,B_L))\in \bigtimes_{k=1}^{L}\big(\R^{l_k\times l_{k-1}}\times\R^{l_k}\big) \subset \bN$ define the \emph{depth}
$\mathcal{L}(\theta)=L$, the \emph{input/output dimensions} $\mathcal{I}(\theta)=l_0$, $\mathcal{O}(\theta)=l_L$, the
\emph{architecture} $\cD(\theta)=(l_0,l_1,\dots,l_L)$, and the \emph{number of parameters}
\begin{equation}\label{eq:fnn_p}
   \cP(\theta)=\sum_{k=1}^{L} l_k\,(l_{k-1}+1).
\end{equation}
Given an activation function $a\in C(\R,\R)$, write
$\mathcal{M}_a(y_1,\dots,y_n):=(a(y_1),\dots,a(y_n))$ for its componentwise action, and define
$x_k:=\mathcal{M}_a(W_k x_{k-1}+B_k)$ for $k\in\N\cap(0,L)$ recursively from $x_0\in\R^{l_0}$. The
\emph{realization} function $\cR_a(\theta)\in C(\R^{l_0},\R^{l_L})$ of $\theta$ is then given by
$(\cR_a\theta)(x_0):=W_L x_{L-1}+B_L$.
\end{definition}

\subsection{Residual neural networks (ResNets)}

\begin{definition}[ResNets]\label{def:resnet}
The set of ResNets is $\bbN=\bigcup_{n\in\N}\bbN_n$, where for $n\in\N$
\[
  \bbN_n=\bigcup_{(d_0,\dots,d_n)\in\N^{n+1}}
  \Big\{(\Gamma_1,\theta_1,\dots,\Gamma_n,\theta_n):
  \theta_k\in\bN,\ \mathcal{I}(\theta_k)=d_{k-1},\ \mathcal{O}(\theta_k)=d_k,\ \Gamma_k\in\R^{d_k\times d_{k-1}}\Big\}.
\]
Given a ResNet $\Theta=(\Gamma_1,\theta_1,\dots,\Gamma_n,\theta_n) \in \bbN$,
we call $\theta_1,\dots,\theta_n$ its \emph{residual branches}, $\Gamma_1,\dots,\Gamma_n$ its
\emph{shortcut} (skip-connection) matrices, and $(\Gamma_i,\theta_i)$, $i\in\{1,\dots,n\}$, its
\emph{residual blocks}. We define its \emph{length} as
$\Length(\Theta)=n$, its input and output dimensions as $\mathcal I(\Theta)=d_0$ and
$\mathcal O(\Theta)=d_n$, its architecture as $\cD(\Theta)=(d_0,\dots,d_n)$, and its
\emph{parameter count} as
\begin{equation}\label{eq:resnet-P}
  \Param(\Theta)=\sum_{i=1}^{n}\big(\cP(\theta_i)+d_i d_{i-1}\big).
\end{equation}
\end{definition}

\noindent The above formulation follows \cite[Definition~3.1]{BaggenstosSalimova2023}.

\begin{definition}[Realization of a ResNet]\label{def:resnet-real}
Let $a\in C(\R,\R)$, $n \in \N$, and let $\Theta=(\Gamma_i,\theta_i)_{i=1}^{n}\in\bbN$, with residual branches
$\theta_i$ and shortcut matrices $\Gamma_i$ (cf.\ \Cref{def:resnet}). Its realization
$\Real_a(\Theta)$ is the map $\R^{\mathcal{I}(\theta_1)} \ni x_0\mapsto x_n \in \R^{\mathcal{O}(\theta_n)}$ defined by the recurrence
\begin{equation}\label{eq:skip}
  x_i=\Gamma_i\,x_{i-1}+(\cR_a\theta_i)(x_{i-1}),\qquad \forall\, i\in\{1,\dots,n\}.
\end{equation}
\end{definition}

\noindent This formulation of the realization recurrence follows
\cite[Definition~3.2]{BaggenstosSalimova2023}.

When $d_0=\dots=d_n$ and $\Gamma_i=\id_{d_0}$ for all $i$, \eqref{eq:skip} has the additive
form $x_i=x_{i-1}+(\cR_a\theta_i)(x_{i-1})$ of an explicit Euler-type update. Such updates
encompass Euler--Maruyama steps when the residual branch represents a drift-plus-diffusion
increment with a fixed Brownian increment; see~\cite{BaggenstosSalimova2023}.
\Cref{fig:resnet} illustrates
the structure.

\begin{figure}[ht]
\centering
\begin{tikzpicture}[
   box/.style={draw,rounded corners,minimum width=1.05cm,minimum height=0.7cm,font=\small},
   sum/.style={draw,circle,inner sep=1pt,font=\small},
   >=Stealth, node distance=1.05cm]
  \node (x0) {$x_0$};
  \node[box,right=0.7 of x0] (t1) {$\theta_1$};
  \node[sum,right=0.7 of t1] (s1) {$+$};
  \node[box,right=0.7 of s1] (t2) {$\theta_2$};
  \node[sum,right=0.7 of t2] (s2) {$+$};
  \node[right=0.55 of s2] (dots) {$\cdots$};
  \node[right=0.55 of dots] (xn1) {$x_{n-1}$};
  \node[box,right=0.6 of xn1] (tn) {$\theta_n$};
  \node[sum,right=0.7 of tn] (sn) {$+$};
  \node[right=0.6 of sn] (out) {$x_n$};
  \draw[->] (x0)--(t1); \draw[->] (t1)--(s1);
  \draw[->] (s1)--(t2); \draw[->] (t2)--(s2);
  \draw[->] (s2)--(dots); \draw[->] (dots)--(xn1); \draw[->] (xn1)--(tn);
  \draw[->] (tn)--(sn); \draw[->] (sn)--(out);
  \draw[->,dashed] (x0) to[out=-35,in=-145] node[below,font=\scriptsize]{$\Gamma_1$} (s1);
  \draw[->,dashed] (s1) to[out=-35,in=-145] node[below,font=\scriptsize]{$\Gamma_2$} (s2);
  \draw[->,dashed] (xn1) to[out=-35,in=-145] node[below,font=\scriptsize]{$\Gamma_n$} (sn);
\end{tikzpicture}
\caption{Realization of a ResNet. The boxes labeled $\theta_i$ represent the residual branches, and
the dashed arrows represent the linear shortcuts $\Gamma_i$. If $\Gamma_i=\id$, then the
recurrence has the additive structure of an explicit Euler-type update.}
\label{fig:resnet}
\end{figure}

\begin{lemma}[Composition of ResNets]\label{lem:resnet-comp}
Let $a\in C(\R,\R)$, $m, n \in \N$, and let
$\Theta_1=(\Gamma^1_1,\theta^1_1,\dots, \allowbreak\Gamma^1_n,\theta^1_n)$ and
$\Theta_2=(\Gamma^2_1,\theta^2_1,\dots,\Gamma^2_m,\theta^2_m)$ be ResNets with
$\mathcal{O}(\theta^1_n)=\mathcal{I}(\theta^2_1)$. We define their composition by concatenation,
\[
  \Theta_2\Concat\Theta_1
  =(\Gamma^1_1,\theta^1_1,\dots,\Gamma^1_n,\theta^1_n,
    \Gamma^2_1,\theta^2_1,\dots,\Gamma^2_m,\theta^2_m)\in\bbN.
\]
Then
\[
  \text{(i)}\quad \Real_a(\Theta_2\Concat\Theta_1)=\Real_a(\Theta_2)\circ\Real_a(\Theta_1),
  \qquad\qquad
  \text{(ii)}\quad \Param(\Theta_2\Concat\Theta_1)=\Param(\Theta_1)+\Param(\Theta_2).
\]
\end{lemma}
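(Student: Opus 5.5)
Both parts follow by unwinding \Cref{def:resnet,def:resnet-real} for the concatenated tuple. First I check that $\Theta_2\Concat\Theta_1$ is a well-defined element of $\bbN_{n+m}$. Write $\cD(\Theta_1)=(d_0,\dots,d_n)$ and $\cD(\Theta_2)=(e_0,\dots,e_m)$. The hypothesis $\mathcal{O}(\theta^1_n)=\mathcal{I}(\theta^2_1)$ gives $d_n=e_0$. Hence the sequence $(d_0,\dots,d_n,e_1,\dots,e_m)\in\N^{n+m+1}$ is an admissible architecture. The $k$th block of the concatenation then satisfies the dimension constraints of \Cref{def:resnet}: for $k\le n$ this is inherited from $\Theta_1$, and for $k=n+j$ it is inherited from $\Theta_2$, using $e_0=d_n$ at $j=1$.

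For (i), fix $x_0\in\R^{d_0}$ and let $(x_i)_{i=0}^{n+m}$ be the recurrence \eqref{eq:skip} for $\Theta_2\Concat\Theta_1$. Since the first $n$ blocks coincide with those of $\Theta_1$, induction on $i\le n$ shows that $x_i$ agrees with the $i$th iterate for $\Theta_1$. In particular $x_n=(\Real_a\Theta_1)(x_0)$.

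Next set $y_j:=x_{n+j}$ for $j\in\{0,\dots,m\}$. The blocks $n+1,\dots,n+m$ are exactly $(\Gamma^2_j,\theta^2_j)$. So $(y_j)$ satisfies $y_j=\Gamma^2_j y_{j-1}+(\cR_a\theta^2_j)(y_{j-1})$ with $y_0=x_n$, which is precisely the recurrence defining $\Real_a(\Theta_2)$ started at $x_n$. Therefore
\[
x_{n+m}=y_m=(\Real_a\Theta_2)(x_n)=\big(\Real_a(\Theta_2)\circ\Real_a(\Theta_1)\big)(x_0).
\]

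For (ii), I split the sum in \eqref{eq:resnet-P} over the concatenated architecture into the indices $i\le n$ and $i=n+j$. The first part is $\sum_{i=1}^n(\cP(\theta^1_i)+d_id_{i-1})=\Param(\Theta_1)$. The second part is $\sum_{j=1}^m(\cP(\theta^2_j)+e_je_{j-1})=\Param(\Theta_2)$. The only point needing care is the boundary term $j=1$, where the shortcut dimension is $e_1\cdot d_n=e_1e_0$; this again uses $d_n=e_0$.

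There is no real obstacle here. The only subtlety is this bookkeeping of the shared dimension $d_n=e_0$ at the junction, which is what ensures that no parameter is double-counted or lost.
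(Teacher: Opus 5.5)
Your proof is correct and follows the same route as the paper: the paper cites Baggenstos--Salimova and notes that the recurrence~\eqref{eq:skip} first runs through the blocks of $\Theta_1$, reaching $\Real_a(\Theta_1)(x_0)$, then through those of $\Theta_2$, while~\eqref{eq:resnet-P} splits additively. Your version spells out the same argument, including the junction compatibility $d_n=e_0$ that the paper leaves implicit.
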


\begin{proof}[Proof of \Cref{lem:resnet-comp}]
The assertions follow from \cite[Definition~3.3 and Lemma~3.4]{BaggenstosSalimova2023}. For
completeness we note that the recurrence~\eqref{eq:skip} on the concatenated ResNet first runs
through the blocks of $\Theta_1$, reaching $\Real_a(\Theta_1)(x_0)$, and then through the blocks
of $\Theta_2$, while~\eqref{eq:resnet-P} adds the two parameter counts.
This proves assertions~\textup{(i)}--\textup{(ii)}.
\end{proof}
\begin{lemma}[One-coordinate lift]\label{lem:resnet-lift}
Let $a\in C(\R,\R)$ and let
$\Theta=((\Gamma_i,\theta_i))_{i=1}^{n}\in\bbN_n$ have architecture
$\cD(\Theta)=(d_0,\dots,d_n)$. Then there exists a ResNet
$\operatorname{Lift}(\Theta)\in\bbN_n$ with the architecture
$(d_0+1,\dots,d_n+1)$ such that
\begin{equation}\label{eq:resnet-lift}
  \Real_a(\operatorname{Lift}(\Theta))(x,s)
  =\big(\Real_a(\Theta)(x),s\big)
  \qquad (x\in\R^{d_0},\ s\in\R),
\end{equation}
and
\begin{equation}\label{eq:resnet-lift-cost}
  \Param(\operatorname{Lift}(\Theta))\le 4\,\Param(\Theta).
\end{equation}
No assumption on $a$ beyond continuity is required.
\end{lemma}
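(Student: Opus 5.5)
The plan is to lift each residual block separately, keeping the extra coordinate $s$ in the shortcut and never touching it with the residual branch. For each $i$, define the lifted shortcut as the block-diagonal matrix
\[
  \tilde\Gamma_i=\begin{pmatrix}\Gamma_i&0\\0&1\end{pmatrix}\in\R^{(d_i+1)\times(d_{i-1}+1)} .
\]
The shortcut will then carry $s$ unchanged from block to block.

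Write $\theta_i=((W_1,B_1),\dots,(W_L,B_L))$ with architecture $(l_0,\dots,l_L)$, where $l_0=d_{i-1}$ and $l_L=d_i$. Define $\tilde\theta_i$ with the same depth by
\[
  \tilde W_1=(W_1\ \ 0)\in\R^{l_1\times(l_0+1)},\qquad
  \tilde W_L=\begin{pmatrix}W_L\\0\end{pmatrix}\in\R^{(l_L+1)\times l_{L-1}},\qquad
  \tilde B_L=\begin{pmatrix}B_L\\0\end{pmatrix}.
\]
All other weights and biases are left unchanged. If $L=1$, the single layer is instead
\[
  \tilde W_1=\begin{pmatrix}W_1&0\\0&0\end{pmatrix},\qquad \tilde B_1=\begin{pmatrix}B_1\\0\end{pmatrix}.
\]
Directly from \Cref{def:fnn}, $\cR_a(\tilde\theta_i)(x,s)=(\cR_a(\theta_i)(x),0)$. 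The hidden layers see only $W_1x+B_1$, so the activation never acts on $s$, and this holds for arbitrary continuous $a$. This is why no identity-representing activation is needed.

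Set $\operatorname{Lift}(\Theta)=(\tilde\Gamma_1,\tilde\theta_1,\dots,\tilde\Gamma_n,\tilde\theta_n)$. An induction over the recurrence \eqref{eq:skip} gives $\tilde x_i=(x_i,s)$, which is \eqref{eq:resnet-lift}. The architecture is $(d_0+1,\dots,d_n+1)$, as required.

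The parameter bound is the only part that needs care, mainly the edge case where $d_i=1$ or $l_k=1$. The lifted branch has
\[
  \cP(\tilde\theta_i)=\cP(\theta_i)+l_1+(l_{L-1}+1)\le 2\cP(\theta_i)\quad(L\ge2),
\]
because $l_1\le l_1(l_0+1)/2$ and $l_{L-1}+1\le l_L(l_{L-1}+1)$. For $L=1$ the count is $(l_1+1)(l_0+2)\le 3\,l_1(l_0+1)$, using $l_0,l_1\ge1$. For the shortcut, $(d_i+1)(d_{i-1}+1)\le 4d_id_{i-1}$. Summing over blocks gives the crude bound $\Param(\operatorname{Lift}(\Theta))\le 4\Param(\Theta)$, which is \eqref{eq:resnet-lift-cost}.
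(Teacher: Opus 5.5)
Your proposal is correct and takes essentially the same approach as the paper. Both use block-diagonal shortcuts that carry $s$, zero-pad the first and last affine layers of each branch (treating depth one separately), and sum the blockwise bounds $(d_i+1)(d_{i-1}+1)\le 4d_id_{i-1}$ and $\cP(\tilde\theta_i)\le 3\cP(\theta_i)$; your depth-$\ge 2$ estimate $\cP(\tilde\theta_i)\le 2\cP(\theta_i)$ is slightly sharper than the paper's, but the final constant $4$ is the same.
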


\begin{proof}[Proof of \Cref{lem:resnet-lift}]
For each $i\in\{1,\dots,n\}$, let $\theta_i^\uparrow$ be an FNN of input dimension
$d_{i-1}+1$ and output dimension $d_i+1$ whose realization is
\[
   (\cR_a\theta_i^\uparrow)(z,s)=\big((\cR_a\theta_i)(z),0\big).
\]
If $\theta_i$ has depth one, this is obtained by appending one zero column and one zero row to
its weight matrix and one zero coordinate to its bias vector; the parameter count then satisfies
$\cP(\theta_i^\uparrow)=(d_i+1)(d_{i-1}+2)\le3d_i(d_{i-1}+1)=3\cP(\theta_i)$, using
$d_i,d_{i-1}\ge1$. If its depth $L$ is at least two, write
$\cD(\theta_i)=(d_{i-1},m_1,\dots,m_{L-1},d_i)$, adjoin one zero
column to the first weight matrix and one zero row and bias coordinate to the last affine layer, and leave
all intermediate layers unchanged; this increases the count by
$m_1+(m_{L-1}+1)\le2\cP(\theta_i)$, since $m_1\le\cP(\theta_i)$ and
$m_{L-1}+1\le\cP(\theta_i)$. In either case
$\cP(\theta_i^\uparrow)\le 3\cP(\theta_i)$. Let
\[
   \Gamma_i^\uparrow=
   \begin{pmatrix}
      \Gamma_i&0\\[1pt]
      0&1
   \end{pmatrix}
   \in\R^{(d_i+1)\times(d_{i-1}+1)}.
\]
Since $d_i,d_{i-1}\ge1$, the shortcut count satisfies
$(d_i+1)(d_{i-1}+1)\le4d_i d_{i-1}$. The recurrence~\eqref{eq:skip} for
$((\Gamma_i^\uparrow,\theta_i^\uparrow))_{i=1}^n$ leaves the last coordinate unchanged and runs the
original recurrence in the first coordinates, which proves~\eqref{eq:resnet-lift}. Summing the
blockwise bounds in~\eqref{eq:resnet-P} gives~\eqref{eq:resnet-lift-cost}.
\end{proof}

\section{Semilinear heat equations and MLP approximations}\label{sec:mlp}

In this section we recall, in the form required below, the MLP approximations
of~\cite{HJKN2020,HJKNW2018,EHJK2019}. Following the organization of
\cite[Section~2]{HJKN2020}, we collect the data, the probability space, the stochastic fixed-point
equation, and the MLP estimator in \Cref{set:mlp}. Every result in this section is formulated in
this setting. More specifically, \eqref{eq:sfp} below records the stochastic fixed-point
representation of the considered PDE solution, and \eqref{eq:mlp} defines the approximation which
will subsequently be represented by a ResNet. The two quantitative ingredients used in the proof
of \Cref{thm:main} are the stability estimate in \Cref{cor:mlp-stab} and the complexity estimate in
\Cref{lem:mlp-cost} below.

\begin{setting}[Semilinear heat equation and its MLP approximation, cf. {\cite[Setting~2.1]{HJKN2020}}]\label{set:mlp}
Let $d\in\N$, $T,L,B\in(0,\infty)$, $p\in[1,\infty)$, $q\in[2,\infty)$, and $\Delta\in(0,1]$. Let
$g_1,g_2\in C(\R^d,\R)$ and $f_1,f_2\in C(\R,\R)$ satisfy, for all $v,w\in\R$ and all $x\in\R^d$, the
Lipschitz bounds and growth bounds
\begin{equation}\label{eq:data-bounds}
  \abs{f_i(w)-f_i(v)}\le L\abs{w-v},\qquad
  \max\{\abs{f_i(0)},\abs{g_i(x)}\}\le B(1+\norm{x})^p,\qquad i\in\{1,2\},
\end{equation}
and the perturbation bound
\begin{equation}\label{eq:data-pert}
  \max\{\abs{f_1(v)-f_2(v)},\,\abs{g_1(x)-g_2(x)}\}\le\Delta\big((1+\norm{x})^{pq}+\abs{v}^q\big).
\end{equation}
Here $(g_1,f_1)$ represents the original PDE data, whereas $(g_2,f_2)$ represents perturbed data
which will later be chosen as network realizations. If $(g_2,f_2)=(g_1,f_1)$, then the perturbation bound holds for every
$\Delta\in(0,1]$. Let $\mathfrak T=\bigcup_{n\in\N}\Z^n$, and let $(\Omega,\cF,\PR)$ be a
probability space which supports a standard $d$-dimensional Brownian motion $\mathbf W$, i.i.d.\
random variables $\mathfrak u^\vartheta$, $\vartheta\in\mathfrak T$, which are uniformly distributed on
$[0,1]$, and independent standard $d$-dimensional Brownian motions $W^\vartheta$,
$\vartheta\in\mathfrak T$. Assume that $\mathbf W$, $(\mathfrak u^\vartheta)_{\vartheta\in\mathfrak T}$, and
$(W^\vartheta)_{\vartheta\in\mathfrak T}$ are mutually independent. For every $\vartheta\in\mathfrak T$ and
$t\in[0,T]$, let
$\cU_t^\vartheta=t+(T-t)\mathfrak u^\vartheta$.

For $i\in\{1,2\}$ let $v_i\in C([0,T]\times\R^d,\R)$ denote the unique at most polynomially
growing solution of the stochastic fixed-point equation: for all $s\in[0,T]$, $x\in\R^d$,
\begin{equation}\label{eq:sfp}
  v_i(s,x)=\E\!\left[g_i\big(x+\mathbf{W}_{T-s}\big)
         +\int_s^T f_i\big(v_i(t,x+\mathbf{W}_{t-s})\big)\,dt\right].
\end{equation}
Here \emph{at most polynomially growing} means that there exist $c,r\in(0,\infty)$, possibly
depending on $i$ and $d$, such that $\abs{v_i(s,x)}\le c(1+\norm{x})^{r}$ for all
$(s,x)\in[0,T]\times\R^d$; existence and uniqueness in this class are recalled after this
setting. Whenever the semilinear heat equation~\eqref{eq:pde} with data $(g_i,f_i)$ possesses an
at most polynomially growing solution $\mathsf u_i$ in the class
\begin{equation}\label{eq:reg-class}
  \mathsf u_i\in C([0,T]\times\R^d,\R)\cap C^{1,2}((0,T]\times\R^d,\R),
\end{equation}
the Feynman--Kac/Duhamel principle
(with $\mathbf W$ the standard Brownian motion of generator $\tfrac12\Delta$, so that no rescaling is
needed) shows that the time reversal $(s,x)\mapsto\mathsf u_i(T-s,x)$
solves~\eqref{eq:sfp}; by uniqueness,
\begin{equation}\label{eq:reversal}
  v_i(s,x)=\mathsf u_i(T-s,x)\qquad(s\in[0,T],\ x\in\R^d).
\end{equation}
In particular $v_i(T,\cdot)=g_i$ and, whenever $\mathsf u_i$ exists,
$v_i(0,\cdot)=\mathsf u_i(T,\cdot)$ is the time-$T$ solution that the main theorem approximates.
The regularity class~\eqref{eq:reg-class} allows for merely continuous
initial data $g_i$ that are smoothed instantaneously by the heat flow.

For $i\in\{1,2\}$, $M\in\N$, $\vartheta\in\mathfrak T$, and
$n\in\N_0\cup\{-1\}$ define the multilevel Picard estimator
$U^{i,\vartheta}_{n,M}\colon[0,T]\times\R^d\times\Omega\to\R$ by
$U^{i,\vartheta}_{-1,M}=U^{i,\vartheta}_{0,M}=0$ and, for $n\in\N$,
\begin{equation}\label{eq:mlp}
\begin{aligned}
  U^{i,\vartheta}_{n,M}(t,x)
  &=\frac{1}{M^n}\sum_{j=1}^{M^n} g_i\big(x+W^{(\vartheta,0,-j)}_{T}-W^{(\vartheta,0,-j)}_{t}\big)\\
  &\quad+\sum_{l=0}^{n-1}\frac{T-t}{M^{\,n-l}}
    \sum_{j=1}^{M^{\,n-l}}
    \Big[\big(f_i\circ U^{i,(\vartheta,l,j)}_{l,M}\big)-\mathbf{1}_{\N}(l)\big(f_i\circ U^{i,(\vartheta,-l,j)}_{l-1,M}\big)\Big]\\
  &\hphantom{\quad+\sum_{l=0}^{n-1}\frac{T-t}{M^{\,n-l}}\sum_{j=1}^{M^{\,n-l}}}
    {}\times\Big(\cU^{(\vartheta,l,j)}_t,\,x+W^{(\vartheta,l,j)}_{\cU^{(\vartheta,l,j)}_t}-W^{(\vartheta,l,j)}_t\Big).
\end{aligned}
\end{equation}
The estimator $U^{i,\vartheta}_{n,M}(t,\cdot)$ approximates the fixed-point solution $v_i(t,\cdot)$; in
particular $U^{i,0}_{N,M}(0,\cdot)$ approximates $v_i(0,\cdot)$.
We write $v:=v_1$, $U^\vartheta_{n,M}:=U^{1,\vartheta}_{n,M}$, $g:=g_1$, and $f:=f_1$ when only the true
data are involved, and we retain the notation $U^{2,\vartheta}_{n,M}$ for the estimator associated
with the surrogate data $(g_2,f_2)$.
\end{setting}

\noindent
For globally Lipschitz nonlinearities $f_i$ and at most polynomially growing $g_i$, existence and
uniqueness of the at most polynomially growing continuous solution $v_i$ of~\eqref{eq:sfp} follow
from \cite[Corollary~3.10]{BeckGononHutzenthalerJentzen2021}; for the classical parabolic
background and the Feynman--Kac representation used in~\eqref{eq:reversal} see
Friedman~\cite{Friedman1964} for
the linear theory, Pardoux \& Peng~\cite{PardouxPeng1992} for the connection to backward
stochastic differential equations as a nonlinear Feynman--Kac formula, and
\cite[Section~2]{HJKN2020} for the
formulation adapted to the present setting. Since $f$ is gradient-independent, the nonlinear operator
$F(v)(t,x)=f(v(t,x))$ acts pointwise, which is exactly what the MLP recursion exploits.

For every fixed $\omega\in\Omega$, all Brownian increments and random times in~\eqref{eq:mlp}
are deterministic, so that $x\mapsto U^\vartheta_{n,M}(0,x)(\omega)$ is obtained from $g$ and $f$
through finitely many affine shifts, compositions, scalar multiplications, and additions; this is
used in \Cref{sec:repr} to represent one realization of the MLP approximation by a ResNet.

\subsection{Two quantitative inputs}

\Cref{lem:apriori,lem:pde-stab,cor:mlp-stab} specialize
\cite[Lemmas~2.2--2.3 and Corollary~2.4]{HJKN2020} to \Cref{set:mlp}; the random-time MLP error
factor used in \Cref{cor:mlp-stab} comes from \cite[Theorem~3.5]{HJKNW2018}. We include the proofs
in \Cref{app:mlp-proofs} to record the constants needed below. \Cref{lem:mlp-cost} is the
elementary balancing estimate for this error factor.

\begin{lemma}[A priori moment bound, {\cite[Lemma~2.2]{HJKN2020}}]\label{lem:apriori}
In \Cref{set:mlp}, the solutions $v_1,v_2$ of~\eqref{eq:sfp} satisfy, for all $i \in \{1, 2\}$, $x\in\R^d$,
\[
  \sup_{t\in[0,T]}\big(\E\big[\abs{v_i(t,x+\mathbf{W}_t)}^q\big]\big)^{1/q}
  \le e^{LT}(T+1)B\Big[\sup_{t\in[0,T]}\big(\E\big[(1+\norm{x+\mathbf{W}_t})^{pq}\big]\big)^{1/q}\Big].
\]
\end{lemma}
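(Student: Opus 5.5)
The plan is to start from the fixed-point equation~\eqref{eq:sfp}, evaluated at the shifted point $x+\mathbf W_s$, and to close a Gronwall-type estimate in the $L^q(\PR)$-norm. Fix $i$ and $x$, and set
\[
  \phi(s):=\big(\E\big[\abs{v_i(s,x+\mathbf W_s)}^q\big]\big)^{1/q},\qquad
  K:=\sup_{t\in[0,T]}\big(\E\big[(1+\norm{x+\mathbf W_t})^{pq}\big]\big)^{1/q}.
\]
Both are finite by the polynomial growth of $v_i$ and Gaussian moments. Let $\tilde{\mathbf W}$ be an independent copy of $\mathbf W$. By the Markov property, or simply the independence of increments, $x+\mathbf W_s+\tilde{\mathbf W}_{t-s}$ has the law of $x+\mathbf W_t$. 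Hence
\[
  v_i(s,x+\mathbf W_s)=\E\Big[g_i(x+\mathbf W_s+\tilde{\mathbf W}_{T-s})+\int_s^T f_i\big(v_i(t,x+\mathbf W_s+\tilde{\mathbf W}_{t-s})\big)\,dt\,\Big|\,\mathbf W_s\Big].
\]

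Next I apply conditional Jensen, i.e.\ contraction of conditional expectation in $L^q$, followed by Minkowski's inequality, including its integral form for the $dt$-integral. The growth bound $\abs{g_i(y)}\le B(1+\norm y)^p$ from~\eqref{eq:data-bounds} controls the terminal term by $BK$. For the nonlinearity I use $\abs{f_i(w)}\le \abs{f_i(0)}+L\abs{w}$, which bounds the integrand in $L^q$ by $BK+L\phi(t)$, since $x+\mathbf W_s+\tilde{\mathbf W}_{t-s}\sim x+\mathbf W_t$. This gives
\[
  \phi(s)\le BK+(T-s)BK+L\int_s^T\phi(t)\,dt\le (T+1)BK+L\int_s^T\phi(t)\,dt .
\]

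The backward Gronwall inequality then yields $\phi(s)\le (T+1)BK\,e^{L(T-s)}\le e^{LT}(T+1)BK$. Taking the supremum over $s$ gives exactly the claim.

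The main obstacle is the technical justification of Gronwall. This needs $\phi$ to be measurable and bounded on $[0,T]$, which follows from the polynomial growth bound $\abs{v_i(s,y)}\le c(1+\norm y)^r$ together with $\sup_t\E[(1+\norm{x+\mathbf W_t})^{rq}]<\infty$, and from the continuity of $v_i$. It also needs a careful use of Fubini and Tonelli to interchange the $dt$-integral with the expectations. Measurability of $(t,\omega)\mapsto v_i(t,x+\mathbf W_t)$ is clear from the continuity of $v_i$.
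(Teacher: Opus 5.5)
Your proposal is correct and follows essentially the same route as the paper. Both arguments freeze $\mathbf W_s$ via an independent copy $\widetilde{\mathbf W}$, apply conditional Jensen and Minkowski's integral inequality, use the bounds $\abs{g_i(y)}\le B(1+\norm y)^p$ and $\abs{f_i(w)}\le B+L\abs w$, and close with a backward Gr\"onwall argument. The only difference is cosmetic: the paper runs Gr\"onwall on the monotone envelope $\Phi(t)=\sup_{s\in[t,T]}\norm{v_i(s,x+\mathbf W_s)}_{L^q}$, which makes measurability automatic. You instead obtain measurability via joint measurability and Tonelli, which also uses the path continuity of $\mathbf W$, not only the continuity of $v_i$.
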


\begin{lemma}[PDE stability, {\cite[Lemma~2.3]{HJKN2020}}]\label{lem:pde-stab}
In \Cref{set:mlp}, the solutions $v_1,v_2$ of~\eqref{eq:sfp} satisfy, for all $t\in[0,T]$, $x\in\R^d$,
\[
\begin{aligned}
  &\E\big[\abs{v_1(t,x+\mathbf{W}_t)-v_2(t,x+\mathbf{W}_t)}\big]\\
  &\quad\le \Delta\,\big(e^{LT}(T+1)\big)^{q+1}(B^q+1)
      \Big(1+\norm{x}+\big(\E[\norm{\mathbf{W}_T}^{pq}]\big)^{1/(pq)}\Big)^{pq}.
\end{aligned}
\]
\end{lemma}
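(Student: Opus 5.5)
We prove \Cref{lem:pde-stab} by a Gronwall-type argument on the averaged difference. For $t\in[0,T]$ set
\[
  e(t):=\sup_{y\in\R^d}\frac{\E\big[\abs{v_1(t,y+\mathbf W_t)-v_2(t,y+\mathbf W_t)}\big]}{\big(1+\norm{y}+(\E[\norm{\mathbf W_T}^{pq}])^{1/(pq)}\big)^{pq}}.
\]
This quantity is finite because both $v_i$ grow at most polynomially. The sup over $y$ is needed because the Markov property shifts the starting point.

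\textbf{Step 1: the difference identity.} Subtracting the two fixed-point equations~\eqref{eq:sfp}, evaluated at the point $x+\mathbf W_t$, gives
\[
  v_1(t,x+\mathbf W_t)-v_2(t,x+\mathbf W_t)
  =\E\Big[g_1-g_2\,\big|\ldots\big]+\int_t^T\E\big[f_1(v_1)-f_2(v_2)\,\big|\ldots\big]\,ds .
\]
Here we use the independence of Brownian increments, so that $x+\mathbf W_t+\tilde{\mathbf W}_{s-t}\stackrel{d}{=}x+\mathbf W_s$. Taking absolute values and expectations yields
\[
  \E\abs{v_1-v_2}(t)\le \E\abs{g_1-g_2}(x+\mathbf W_T)+\int_t^T\Big(\E\abs{f_1-f_2}\big(v_2(s,x+\mathbf W_s)\big)+L\,\E\abs{v_1-v_2}(s,x+\mathbf W_s)\Big)\,ds .
\]
In the last step we split $f_1(v_1)-f_2(v_2)=(f_1(v_1)-f_1(v_2))+(f_1(v_2)-f_2(v_2))$ and use the Lipschitz bound~\eqref{eq:data-bounds}.

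\textbf{Step 2: bounding the data terms.} By~\eqref{eq:data-pert}, the data terms are at most
\[
  \Delta\Big(\E(1+\norm{x+\mathbf W_s})^{pq}+\E\abs{v_2(s,x+\mathbf W_s)}^q\Big).
\]
We bound the $q$-th moment of $v_2$ with \Cref{lem:apriori}, which gives the factor $(e^{LT}(T+1)B)^q$ times $\sup_s\E(1+\norm{x+\mathbf W_s})^{pq}$. We then use Minkowski together with $\E\norm{\mathbf W_s}^{pq}\le\E\norm{\mathbf W_T}^{pq}$ to get
\[
  \E(1+\norm{x+\mathbf W_s})^{pq}\le\big(1+\norm{x}+(\E\norm{\mathbf W_T}^{pq})^{1/(pq)}\big)^{pq}=:K_x .
\]
Altogether the inhomogeneous term is at most $\Delta K_x\big[(1+T)+T(e^{LT}(T+1)B)^q\big]$, which is bounded by $\Delta K_x(T+1)(e^{LT}(T+1))^q(B^q+1)$.

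\textbf{Step 3: Gronwall and the constant.} The resulting inequality $\phi(t)\le A+L\int_t^T\phi(s)\,ds$, for $\phi(t)=\E\abs{v_1-v_2}(t,x+\mathbf W_t)$, holds for each fixed $x$. The function $\phi$ is bounded by polynomial growth, so backward Gronwall gives $\phi(t)\le Ae^{L(T-t)}\le Ae^{LT}$. Combining $e^{LT}(T+1)$ with the factor from Step 2 gives exactly $\Delta(e^{LT}(T+1))^{q+1}(B^q+1)K_x$.

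\textbf{Main obstacle.} The delicate step is justifying the conditional-expectation (Markov) identity in Step 1 rigorously, with Fubini, given only measurability and polynomial growth. I would handle it by using continuity of $v_i$ and independent Brownian motions, as in \cite[Lemma~2.3]{HJKN2020}. Note that the fixed-$x$ Gronwall avoids needing the sup-over-$y$ quantity $e(t)$ after all, since the shift is absorbed by the distributional identity.
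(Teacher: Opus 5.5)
Your proof is correct and is essentially the paper's argument: subtract the two fixed-point equations at $x+\mathbf W_t$, use $x+\mathbf W_t+\widetilde{\mathbf W}_{s-t}\stackrel{d}{=}x+\mathbf W_s$, bound the data terms by \eqref{eq:data-pert} and \Cref{lem:apriori}, and close with backward Gr\"onwall. The differences are cosmetic: you split through $f_1(v_2)$ rather than $f_2(v_1)$, so \Cref{lem:apriori} is applied to $v_2$ instead of $v_1$ (equally valid), and you run Gr\"onwall on $\phi$ itself rather than on its running supremum; the weighted supremum $e(t)$ you introduce at the start is never used and should be dropped, and its finiteness would in any case require \Cref{lem:apriori}, not mere polynomial growth, since the growth exponent of $v_i$ may exceed $pq$.
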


\begin{corollary}[MLP stability bound, {\cite[Corollary~2.4]{HJKN2020}}]\label{cor:mlp-stab}
In \Cref{set:mlp}, for $x\in\R^d$ and $N,M\in\N$, the surrogate-data estimator
$U^{2,0}_{N,M}$ approximates the true solution $v_1$ with
\begin{equation}\label{eq:mlp-stab}
\begin{aligned}
  \big(\E\big[\abs{U^{2,0}_{N,M}(0,x)-v_1(0,x)}^2\big]\big)^{1/2}
  &\le \big(e^{LT}(T+1)\big)^{q+1}(B^q+1)
      \Big(\Delta+\frac{e^{M/2}(1+2LT)^N}{M^{N/2}}\Big)\\[2pt]
  &\quad\times
      \Big(1+\norm{x}+\big(\E[\norm{\mathbf{W}_T}^{pq}]\big)^{\frac1{pq}}\Big)^{pq}.
\end{aligned}
\end{equation}
\end{corollary}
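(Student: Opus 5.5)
We split the error by the triangle inequality in $L^2(\PR)$:
\[
  \big(\E\big[\abs{U^{2,0}_{N,M}(0,x)-v_1(0,x)}^2\big]\big)^{1/2}
  \le \big(\E\big[\abs{U^{2,0}_{N,M}(0,x)-v_2(0,x)}^2\big]\big)^{1/2}+\abs{v_2(0,x)-v_1(0,x)} .
\]
The first term is the pure MLP error for the data $(g_2,f_2)$. The second term is a deterministic PDE perturbation error. We estimate them separately and then absorb both into the common prefactor $\big(e^{LT}(T+1)\big)^{q+1}(B^q+1)$ and the moment factor $\big(1+\norm{x}+(\E[\norm{\mathbf W_T}^{pq}])^{1/(pq)}\big)^{pq}$.

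For the second term we apply \Cref{lem:pde-stab} with $t=0$. Since $\mathbf W_0=0$, the expectation on the left is just $\abs{v_1(0,x)-v_2(0,x)}$. This gives the $\Delta$-summand directly with exactly the stated constants.

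For the first term we invoke the random-time MLP error bound \cite[Theorem~3.5]{HJKNW2018}. Applied to the Lipschitz nonlinearity $f_2$ with constant $L$, it gives
\[
  \big(\E\big[\abs{U^{2,0}_{N,M}(0,x)-v_2(0,x)}^2\big]\big)^{1/2}
  \le \frac{e^{M/2}(1+2LT)^N}{M^{N/2}}
  \Big[\big(\E[\abs{g_2(x+\mathbf W_T)}^2]\big)^{1/2}
  +T\sup_{t\in[0,T]}\big(\E[\abs{f_2(v_2(t,x+\mathbf W_t))}^2]\big)^{1/2}\Big] .
\]
The bracket is controlled with \eqref{eq:data-bounds}. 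First,
\[
  \abs{f_2(v)}\le \abs{f_2(0)}+L\abs{v}\le B(1+\norm{x})^p+L\abs{v}.
\]
Then we use \Cref{lem:apriori} with $q\ge2$ and Jensen/Hölder to pass from $L^2$ to $L^q$ moments. This bounds the bracket by
\[
  (1+LT)\,e^{LT}(T+1)\,B\,\sup_{t}\big(\E[(1+\norm{x+\mathbf W_t})^{pq}]\big)^{1/q}.
\]

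Next we bound the Gaussian moment factor. By Minkowski's inequality in $L^{pq}$, and since $\norm{\mathbf W_t}$ is stochastically dominated by $\norm{\mathbf W_T}$ for $t\le T$,
\[
  \big(\E[(1+\norm{x+\mathbf W_t})^{pq}]\big)^{1/q}
  \le \Big(1+\norm{x}+\big(\E[\norm{\mathbf W_T}^{pq}]\big)^{1/(pq)}\Big)^{p}
  \le \Big(1+\norm{x}+\big(\E[\norm{\mathbf W_T}^{pq}]\big)^{1/(pq)}\Big)^{pq}.
\]
The last step uses that the base is at least $1$ and $q\ge1$.

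It remains to match constants. We have $B\le B^q+1$, and $(1+LT)e^{LT}(T+1)\le \big(e^{LT}(T+1)\big)^{q+1}$ because $1+LT\le e^{LT}$ and $q+1\ge 3$. Adding the two contributions then yields \eqref{eq:mlp-stab}.

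The main obstacle is the first term: quoting \cite[Theorem~3.5]{HJKNW2018} in a form compatible with \Cref{set:mlp}. That means checking that its error factor is exactly $e^{M/2}(1+2LT)^N M^{-N/2}$ for the uniform random-time estimator \eqref{eq:mlp}, and that its right-hand side involves precisely the $L^2$-norms of $g_2$ and $f_2\circ v_2$ along the Brownian path. Any mismatch would have to be fixed by a slightly larger constant, which the generous prefactor should still accommodate.
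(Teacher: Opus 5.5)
Your proposal is correct and follows the paper's proof. It uses the same triangle-inequality split at the deterministic value $v_2(0,x)$, \Cref{lem:pde-stab} at $t=0$ for the surrogate bias, and the MLP error bound of \cite[Theorem~3.5]{HJKNW2018} for the Monte Carlo term, with both contributions absorbed into the common prefactor and moment factor. The only difference is that the paper uses the cited bound in its form with the terminal data $g_2$ and $f_2(0)$, which \eqref{eq:data-bounds} controls directly (\Cref{lem:apriori} is needed only for integrability). Your detour through $f_2\circ v_2$ and the a priori moment bound is therefore unnecessary but harmless, since your constant bookkeeping absorbs it into $\big(e^{LT}(T+1)\big)^{q+1}(B^q+1)$.
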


For $M=N$, the following elementary estimate balances the MLP error factor with a recursive
representation factor $(AN)^N$.

\begin{lemma}[MLP balancing estimate]\label{lem:mlp-cost}
	Let $L,T\in(0,\infty)$, $A\in[1,\infty)$, and $\xi\in(0,\infty)$. Then there exists
	$C_{\xi,A}\in(0,\infty)$, depending only on $L,T,\xi,A$, such that, for every
	$\varepsilon\in(0,1]$, there exists $N\in\N\cap[2,\infty)$ satisfying
	\begin{equation}\label{eq:mlp-target}
		\frac{e^{N/2}(1+2LT)^N}{N^{N/2}}\le\varepsilon
	\end{equation}
	and
	\begin{equation}\label{eq:mlp-balance-cost}
		(AN)^N\le C_{\xi,A}\varepsilon^{-(2+\xi)}.
	\end{equation}
\end{lemma}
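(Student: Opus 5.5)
Set $c:=e^{1/2}(1+2LT)\ge e^{1/2}>1$ and $\varepsilon_N:=c^N N^{-N/2}$, so the left-hand side of \eqref{eq:mlp-target} is $\varepsilon_N$. The plan is to choose $N$ as the smallest admissible integer and to use the fact that the factor $N^{-N/2}$ in $\varepsilon_N$ beats $(AN)^N$ raised to any power slightly larger than $1/2$. The key observation is
\[
  (AN)^N \cdot \varepsilon_N^{\,2+\xi} \;=\; A^N c^{(2+\xi)N} N^{N-(1+\xi/2)N} \;=\; \big(A c^{2+\xi} N^{-\xi/2}\big)^N .
\]
This tends to $0$ as $N\to\infty$, because $N^{-\xi/2}\to0$ while $A c^{2+\xi}$ is fixed.

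\textbf{Step 1 (choice of $N$).} We have $\varepsilon_{N+1}/\varepsilon_N = c\,(N/(N+1))^{N/2}(N+1)^{-1/2}\to0$, so $\varepsilon_N\to0$. Hence for each $\varepsilon\in(0,1]$ the set $\{N\in\N\cap[2,\infty):\varepsilon_N\le\varepsilon\}$ is nonempty. Let $N_\varepsilon$ be its minimum, so \eqref{eq:mlp-target} holds by construction.

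\textbf{Step 2 (case $N_\varepsilon=2$).} Here $(AN_\varepsilon)^{N_\varepsilon}=4A^2\le 4A^2\varepsilon^{-(2+\xi)}$ since $\varepsilon\le1$. So any $C_{\xi,A}\ge4A^2$ covers this case.

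\textbf{Step 3 (case $N_\varepsilon\ge3$).} Minimality gives $\varepsilon<\varepsilon_{N_\varepsilon-1}$. Writing $K:=N_\varepsilon-1\ge2$, it therefore suffices to bound $(A(K+1))^{K+1}\varepsilon_K^{\,2+\xi}$ uniformly in $K\ge2$. By the key observation,
\[
  (A(K+1))^{K+1}\varepsilon_K^{\,2+\xi} = A(K+1)\Big(\tfrac{K+1}{K}\Big)^{K}\big(A c^{2+\xi}K^{-\xi/2}\big)^{K}\le 3A(K+1)\big(Ac^{2+\xi}K^{-\xi/2}\big)^K ,
\]
using $(1+1/K)^K\le e\le3$. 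Choose $K_0$, depending only on $A,c,\xi$, such that $Ac^{2+\xi}K^{-\xi/2}\le\tfrac12$ for $K\ge K_0$. Then for $K\ge K_0$ the right-hand side is at most $3A(K+1)2^{-K}\le 3A$. For the finitely many $2\le K<K_0$ the right-hand side is a finite maximum of explicit quantities depending only on $L,T,\xi,A$.

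\textbf{Step 4 (constant).} Let $C_{\xi,A}$ be the maximum of $4A^2$, $3A$, and this finite maximum. Then $(AN_\varepsilon)^{N_\varepsilon}\le C_{\xi,A}\,\varepsilon_{N_\varepsilon-1}^{-(2+\xi)}\cdot\big(\varepsilon_{N_\varepsilon-1}/\varepsilon\big)^{2+\xi}$ in the case $N_\varepsilon\ge3$, and since $\varepsilon<\varepsilon_{N_\varepsilon-1}$ this yields \eqref{eq:mlp-balance-cost}.

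The only delicate point is Step 3: inverting the super-exponential decay of $\varepsilon_N$. Minimality of $N_\varepsilon$ handles this cleanly, because it lets us compare against $\varepsilon_{N_\varepsilon-1}$ directly rather than solving for $N$ explicitly. The remaining work is to track the shift from $K$ to $K+1$, which costs only the factor $A(K+1)(1+1/K)^K$ and is absorbed by the geometric decay $2^{-K}$.
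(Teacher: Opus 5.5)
Your proof is correct and takes essentially the same approach as the paper. Both choose $N$ minimal with $\varepsilon_N\le\varepsilon$, use minimality to compare $(AN)^N$ with $\varepsilon_{N-1}^{-(2+\xi)}$, and absorb small $N$ into the constant. The only difference is how the comparison is obtained: the paper uses the limit $n\log(An)/(-\log\varepsilon_{n-1})\to2$ and starts the minimization at a threshold $N_0$, whereas you use the explicit identity $(AN)^N\varepsilon_N^{2+\xi}=(Ac^{2+\xi}N^{-\xi/2})^N$ and bound the finitely many small indices separately.
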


\section{ResNet representation of MLP estimators}\label{sec:repr}

In this section we establish that every fixed realization of the MLP approximation can be
represented by a ResNet with a polynomially bounded number of parameters. We first formulate the
data approximation hypotheses in \Cref{set:nn}. The one-dimensional approximation result in
\Cref{lem:1d-approx} yields, in \Cref{cor:trunc-surrogate,lem:ridge-surrogate}, explicit families
which satisfy these hypotheses. Thereafter, \Cref{lem:atoms} constructs the elementary
residual updates, \Cref{prop:repr} represents one realization of the MLP approximation, and
\Cref{lem:prec} estimates the number of parameters of the resulting ResNet. The following setting
is the residual-network analogue of \cite[Setting~4.1]{BaggenstosSalimova2023} and
\cite[Theorem~1.1]{HJKN2020}.

\begin{setting}[Network-representable data]\label{set:nn}
	Let $a\in C(\R,\R)$, let $L,B\in(0,\infty)$ and
	$p\in[1,\infty)$, let $f\in C(\R,\R)$ satisfy
	$\Lip(f)\le L$, and, for every $d\in\N$, let
	$g_d\in C(\R^d,\R)$. Assume there exist families of FNNs
	$(\mathfrak f_\delta)_{\delta\in(0,1]}\subseteq\bN$ with
	$\mathcal I(\mathfrak f_\delta)=\mathcal O(\mathfrak f_\delta)=1$ and, for each $d\in\N$,
	$(\mathfrak g_{d,\delta})_{\delta\in(0,1]}\subseteq\bN$ with
	$\mathcal I(\mathfrak g_{d,\delta})=d$ and $\mathcal O(\mathfrak g_{d,\delta})=1$,
	and constants $\kappa,C_g,L_f\in(0,\infty)$ such that for all $d\in\N$, $\delta\in(0,1]$, $v,w\in\R$, $x\in\R^d$:
\begin{gather}
  \abs{f(v)-(\cR_a\mathfrak{f}_\delta)(v)}\le \delta, \quad
  \abs{g_d(x)-(\cR_a\mathfrak{g}_{d,\delta})(x)}
   \le C_g\,\delta\, d^\kappa(1+\norm{x}^\kappa),\label{eq:nn-acc}\\
  \begin{aligned}
  \abs{(\cR_a\mathfrak f_\delta)(w)-(\cR_a\mathfrak f_\delta)(v)}&\le L_f\abs{w-v},\qquad
   \abs{(\cR_a\mathfrak f_\delta)(0)}\le B,\\
  \abs{(\cR_a\mathfrak g_{d,\delta})(x)}&\le B(1+\norm x)^p,
  \end{aligned}\label{eq:nn-surr}\\
  \cP(\mathfrak{f}_\delta)+\cP(\mathfrak{g}_{d,\delta})\le C_g\,d^\kappa\,\delta^{-\kappa}.
 \label{eq:nn-cost}
\end{gather}
\end{setting}

\begin{remark}\label{rem:setNN-content}
The polynomial bound in~\eqref{eq:nn-cost} expresses that the data $f$ and $g_d$ do not
themselves carry the curse of dimensionality. This hypothesis is analogous to the corresponding
assumptions in~\cite{HJKN2020,BaggenstosSalimova2023}. In contrast to the feedforward arguments
in~\cite{HJKN2020,JentzenSalimovaWelti2021}, however, we do not require that the activation
function $a$ admit an exact FNN representation of the identity. Indeed, the residual updates in
\Cref{prop:repr} are combined by the concatenation rule in \Cref{lem:resnet-comp}, and no identity
subnetwork is inserted between consecutive updates.

When \Cref{set:mlp,set:nn} are combined, the bounds in~\eqref{eq:nn-surr} allow the true and
surrogate data to be controlled by common constants, uniformly in the approximation accuracy
$\delta$. Without a uniform Lipschitz bound for
$\cR_a(\mathfrak f_\delta)$, the constants in the stability estimate of \Cref{cor:mlp-stab} would
depend on $\delta$, and the polynomial complexity estimate would in general be lost. We therefore
replace $L$ by $\max\{L,L_f\}$ and enlarge $B$, if necessary, so that $B$ dominates both the
surrogate value bound $B_f$ in \Cref{cor:trunc-surrogate} and the uniform bound for the
$g_d$-surrogates in \Cref{lem:ridge-surrogate}. Henceforth, $L$ and $B$ denote these enlarged
constants.

\end{remark}

We next verify the hypotheses of \Cref{set:nn} for a concrete class of initial conditions. The
class consists of finite sums of ridge functions of the form
\[
  g_d(x)=\sum_{j=1}^{J}\phi_j\big(\langle\alpha_j^d,x\rangle\big).
\]
Such functions arise, for example, as bounded truncations of European basket option payoffs,
 as single-neuron initial conditions for Allen--Cahn-type equations, and as initial conditions
for Fisher--Kolmogorov-type fronts propagating in a fixed direction.

We first establish the one-dimensional approximation result; its final assertion, proved by a
variation-adapted staircase, serves the ridge profiles of \Cref{lem:ridge-surrogate}. For
$h\colon\R\to\R$ and an interval $I\subseteq\R$ we write $\mathrm{TV}(h;I)$ for the total variation
of $h$ on $I$, that is, the supremum of $\sum_k\abs{h(t_k)-h(t_{k-1})}$ over all finite increasing
sequences $(t_k)$ in $I$, and $\mathrm{TV}(h):=\mathrm{TV}(h;\R)$.

\begin{lemma}[One-dimensional sigmoidal approximation]\label{lem:1d-approx}
	Let $a\in C^1(\R,\R)$ be nondecreasing and assume that the finite limits
	\begin{equation}\label{eq:a-sigmoidal}
		A_-:=\lim_{r\to-\infty}a(r)\in\R\qquad\text{and}\qquad A_+:=\lim_{r\to+\infty}a(r)\in\R
	\end{equation}
	exist and satisfy $A_-<A_+$.
	Let $\widetilde a:=(a-A_-)/(A_+-A_-)$, and assume that
	\begin{equation}\label{eq:a-admissible}
		\mathrm{TV}(\widetilde a')<\infty,\qquad
		\widetilde\Lambda_a
		:=\int_\R\abs{\widetilde a(r)-\mathbf 1_{(0,\infty)}(r)}\,dr<\infty.
	\end{equation}
	We call such an $a$ an \emph{admissible sigmoidal activation}.
	Then there exists $C_a\in[1,\infty)$, depending only on $a$, such that for every Lipschitz
	function $\phi\colon\R\to\R$, every $R\in[1,\infty)$, and every $n\in\N$, there exists an FNN
	$\phi_{n,R}\in\bN$ satisfying
	\begin{equation}\label{eq:1d-architecture}
		\cD(\phi_{n,R})=(1,n,1),\qquad \cP(\phi_{n,R})=3n+1,
	\end{equation}
	\begin{equation}\label{eq:1d-approx-in}
		\sup_{v\in[-R,R]}\abs{\phi(v)-(\cR_a\phi_{n,R})(v)}
		\le C_a\Lip(\phi)\frac{R}{n},
	\end{equation}
	and
	\begin{equation}\label{eq:1d-lip}
		\Lip\big(\cR_a\phi_{n,R}\big)\le C_a\Lip(\phi).
	\end{equation}
	If, in addition, $\phi$ is constant on each of
	$(-\infty,-R]$ and $[R,\infty)$, then
	\begin{equation}\label{eq:1d-approx-global}
		\sup_{v\in\R}\abs{\phi(v)-(\cR_a\phi_{n,R})(v)}
		\le C_a\Lip(\phi)\frac{R}{n}.
	\end{equation}
	Finally, if $\phi$ has finite total variation, then, for every $\delta\in(0,1]$, there exist
	$m\in\N$ and an FNN $\phi_\delta\in\bN$ with
	\begin{equation}\label{eq:1d-bv}
		\cD(\phi_\delta)=(1,m,1),\qquad
		\sup_{v\in\R}\abs{\phi(v)-(\cR_a\phi_\delta)(v)}\le\delta,\qquad
		m\le\max\Big\{1,\Big\lceil\frac{3\,\mathrm{TV}(\phi)}{\delta}\Big\rceil-1\Big\}.
	\end{equation}
\end{lemma}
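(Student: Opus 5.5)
The construction is a staircase built from translated and dilated copies of $\widetilde a$. For a Lipschitz $\phi$, an $R\ge1$ and an $n\in\N$, we put nodes $t_k=-R+2Rk/n$ for $k\in\{0,\dots,n\}$, so the spacing is $h=2R/n$. We then set
\[
  (\cR_a\phi_{n,R})(v)=\phi(t_0)+\sum_{k=1}^{n}\big(\phi(t_k)-\phi(t_{k-1})\big)\,\widetilde a\big(\lambda(v-s_k)\big),
\]
with midpoints $s_k=(t_{k-1}+t_k)/2$ and a dilation $\lambda=1/h$. Writing $\widetilde a=(a-A_-)/(A_+-A_-)$ turns this into a $(1,n,1)$ network. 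Its first-layer weights are $\lambda$ and its biases are $-\lambda s_k$. Its output weights are $c_k=(\phi(t_k)-\phi(t_{k-1}))/(A_+-A_-)$, and the output bias absorbs $\phi(t_0)$ together with $-A_-\sum_k c_k$. The parameter count is $n\cdot 2+(n+1)=3n+1$, which gives \eqref{eq:1d-architecture}.

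For the Lipschitz bound \eqref{eq:1d-lip}, the derivative of the realization is
\[
  \sum_k(\phi(t_k)-\phi(t_{k-1}))\,\lambda\,\widetilde a'(\lambda(v-s_k)),
\]
and each increment satisfies $|\phi(t_k)-\phi(t_{k-1})|\le\Lip(\phi)h$. Since $\lambda h=1$, the derivative is bounded by $\Lip(\phi)\sup_u\sum_{k}\widetilde a'(u-k)$. This shifted sum is at most $\int\widetilde a'+\mathrm{TV}(\widetilde a')=1+\mathrm{TV}(\widetilde a')$, by the standard comparison of a Riemann sum with an integral for a $BV$ function. That constant depends only on $a$.

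For the error estimates \eqref{eq:1d-approx-in} and \eqref{eq:1d-approx-global}, we compare the realization with the exact staircase $\phi(t_0)+\sum_k(\phi(t_k)-\phi(t_{k-1}))\mathbf 1_{(0,\infty)}(v-s_k)$. On $[-R,R]$ this staircase is within $\Lip(\phi)h$ of $\phi$. The difference between the two is bounded by
\[
  \Lip(\phi)h\sum_k\abs{\widetilde a(\lambda(v-s_k))-\mathbf 1_{(0,\infty)}(\lambda(v-s_k))}.
\]
The main obstacle is bounding $\sup_u\sum_{k\in\Z}\abs{\widetilde a(u-k-\tfrac12)-\mathbf 1_{(0,\infty)}(u-k-\tfrac12)}$ uniformly in terms of $\widetilde\Lambda_a$. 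The function $\widetilde a-\mathbf 1_{(0,\infty)}$ is monotone on each half-line, with values in $[-1,0]$ on $(0,\infty)$ and in $[0,1]$ on $(-\infty,0]$. So the shifted sum over each half-line is at most the corresponding integral plus $1$, and the total is at most $\widetilde\Lambda_a+2$. This gives the error bound $\Lip(\phi)\,(2R/n)(\widetilde\Lambda_a+3)$, i.e.\ \eqref{eq:1d-approx-in}.

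If $\phi$ is also constant outside $[-R,R]$, the staircase agrees with $\phi$ there, since all increments are zero and the partial sums equal $\phi(\pm R)$. The same tail sum then bounds the error on all of $\R$, giving \eqref{eq:1d-approx-global}. We take $C_a=\max\{2(\widetilde\Lambda_a+3),\,1+\mathrm{TV}(\widetilde a')\}$.

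For the $BV$ statement \eqref{eq:1d-bv}, we use a variation-adapted staircase. Write $V=\mathrm{TV}(\phi)$; if $V\le\delta/3$, a constant network suffices (with $m=1$ and zero output weight). Otherwise, the function $\phi$ is continuous and its variation function $x\mapsto\mathrm{TV}(\phi;(-\infty,x])$ is continuous and nondecreasing. Choose $m\le\lceil 3V/\delta\rceil-1$ jump points $s_1<\dots<s_m$ so that on each interval between consecutive points, including the two unbounded ones, the variation of $\phi$ is at most $\delta/3$. The exact staircase with jumps $c_k=\phi(s_k)-\phi(s_{k-1})$ at these points is then within $\delta/3$ of $\phi$ everywhere, because the limits at $\pm\infty$ exist. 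Finally, we replace each indicator by $\widetilde a(\lambda(\cdot-s_k))$ with $\lambda$ large. Because $\widetilde a-\mathbf 1_{(0,\infty)}$ is bounded by $1$, tends to $0$ at $\pm\infty$, and $\sum|c_k|\le V$, choosing $\lambda$ so that the distinct $s_k$ are far apart on the $\lambda$-scale makes the total smoothing error at most $2\delta/3$. The delicate point here is again the count $m$ and handling points near several jumps; at most one jump is near a given $v$ at scale $\lambda^{-1}$, which bounds that contribution by $|c_k|\le\delta/3$. The remaining jumps contribute at most $V\sup_{|u|\ge\lambda\rho}|\widetilde a-\mathbf 1|\le\delta/3$ for suitable $\rho$ and $\lambda$.
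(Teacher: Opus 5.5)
Your proposal is correct and takes essentially the paper's route: a midpoint staircase with bandwidth equal to the grid spacing, a Riemann-sum-versus-integral comparison controlled by $\widetilde\Lambda_a$ and $\mathrm{TV}(\widetilde a')$, and, for the finite-variation assertion, a staircase adapted to the variation function combined with a nearest-center argument. The differences are minor. You bound $\sup_u\sum_k\abs{\widetilde a(u-k)-\mathbf 1_{(0,\infty)}(u-k)}$ by monotonicity on each half-line instead of the paper's amalgam estimate with $\mathrm{TV}(\abs{\widetilde a-\mathbf 1_{(0,\infty)}})\le 2$; both give $\widetilde\Lambda_a+2$. You also get the separation of the jump points from their finiteness and distinctness rather than from the quantitative gap $\delta/(3\Lip(\phi))$ that the paper derives from Lipschitz continuity of the variation function, which is enough because the dilation may depend on $\phi$ and $\delta$.
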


The proof of \Cref{lem:1d-approx} is provided in \Cref{app:1d-proof}.

\begin{remark}\label{rem:smooth-activ}
The logistic function, $\tanh$, the error function, and the algebraic sigmoid
$r\mapsto r/\sqrt{1+r^2}$ are admissible sigmoidal activations: in each case $\widetilde a'$ is
unimodal, hence of bounded variation, and $\widetilde a$ approaches its limits at an integrable
rate. None of them represents the identity through the two-unit construction
of~\cite{HJKN2020,JentzenSalimovaWelti2021}; conversely, $\mathrm{ReLU}$ does not
satisfy~\eqref{eq:a-sigmoidal} but represents the identity through
$r=\mathrm{ReLU}(r)-\mathrm{ReLU}(-r)$ and is therefore covered by~\cite{HJKN2020}.
\end{remark}

\begin{corollary}[Uniformly Lipschitz surrogates for globally Lipschitz truncations]\label{cor:trunc-surrogate}
	Let $a$ be an admissible sigmoidal activation in the sense of \Cref{lem:1d-approx}, let
	$L,M_f\in(0,\infty)$, and let $f\in C(\R,\R)$ satisfy $\Lip(f)\le L$ and be constant on each
	of $(-\infty,-M_f]$ and $[M_f,\infty)$---a \emph{globally Lipschitz truncation}. Then there
	exist a family $(\mathfrak f_\delta)_{\delta\in(0,1]}\subseteq\bN$ and constants
	$L_f,B_f,C_f\in(0,\infty)$, where $L_f$ and $C_f$ depend only on $a,L,M_f$, whereas $B_f$
	depends only on $\abs{f(0)}$, such that
	for every $\delta\in(0,1]$,
	\begin{equation}\label{eq:trunc-surrogate}
		\sup_{v\in\R}\abs{f(v)-(\cR_a\mathfrak f_\delta)(v)}\le\delta,\quad
		\Lip(\cR_a\mathfrak f_\delta)\le L_f,\quad
		\abs{(\cR_a\mathfrak f_\delta)(0)}\le B_f,\quad
		\cP(\mathfrak f_\delta)\le C_f\,\delta^{-1}.
	\end{equation}
	Moreover, $\mathfrak f_\delta$ may be chosen with architecture $(1,n_f,1)$ and
	$n_f\le C_f\delta^{-1}$. In particular $f$ satisfies the $f$-part
	of~\eqref{eq:nn-acc}--\eqref{eq:nn-surr} with uniform Lipschitz constant $L_f$, and the $f$-part
	of~\eqref{eq:nn-cost} with exponent $\kappa=1$.
\end{corollary}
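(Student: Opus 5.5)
We apply \Cref{lem:1d-approx} directly with $\phi:=f$ and $R:=M_f$ (enlarged to $\max\{M_f,1\}$ if $M_f<1$, which keeps $f$ constant outside $[-R,R]$). Since $\Lip(f)\le L$ and $f$ is constant on $(-\infty,-R]$ and on $[R,\infty)$, the global estimate \eqref{eq:1d-approx-global} applies. For $n\in\N$ it yields a network $\phi_{n,R}$ with architecture $(1,n,1)$ and $\cP(\phi_{n,R})=3n+1$ such that
\[
\sup_{v\in\R}\abs{f(v)-(\cR_a\phi_{n,R})(v)}\le C_aLR/n,
\]
and \eqref{eq:1d-lip} gives $\Lip(\cR_a\phi_{n,R})\le C_aL$. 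For $\delta\in(0,1]$ we set $n_f:=\lceil C_aLR\,\delta^{-1}\rceil$ and $\mathfrak f_\delta:=\phi_{n_f,R}$.

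This choice gives the sup bound $\delta$, and it gives $L_f:=C_aL$, which depends only on $a$ and $L$. For the size, $\delta\le1$ implies $n_f\le (C_aLR+1)\delta^{-1}$ and $\cP(\mathfrak f_\delta)=3n_f+1\le(3C_aLR+4)\delta^{-1}$. So $C_f:=3C_a L\max\{M_f,1\}+4$ works for both the bound on $n_f$ and the parameter bound, and it depends only on $a,L,M_f$.

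The value bound at zero follows from the sup estimate with $v=0$:
\[
\abs{(\cR_a\mathfrak f_\delta)(0)}\le\abs{f(0)}+\delta\le\abs{f(0)}+1=:B_f,
\]
which depends only on $\abs{f(0)}$. Finally, we compare with \Cref{set:nn}. The estimates \eqref{eq:nn-acc} and \eqref{eq:nn-surr} for the $f$-part hold after enlarging $B\ge B_f$, as described in \Cref{rem:setNN-content}. The bound $\cP(\mathfrak f_\delta)\le C_f\delta^{-1}\le C_f d^\kappa\delta^{-\kappa}$ holds for $\kappa\ge1$, so the $f$-part of \eqref{eq:nn-cost} holds with exponent $\kappa=1$ (or with any larger common $\kappa$ once combined with the $g_d$-part).

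Once \Cref{lem:1d-approx} is available nothing here is a real obstacle. The only care needed is to confirm that the constants $C_a$ in \eqref{eq:1d-approx-global} and \eqref{eq:1d-lip} are uniform in $n$, so that $L_f$ does not depend on $\delta$. This uniform Lipschitz control is what \Cref{rem:setNN-content} requires, and it is exactly what \eqref{eq:1d-lip} states.
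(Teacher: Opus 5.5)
Your proposal is correct and follows the paper's proof essentially verbatim. You apply \eqref{eq:1d-approx-global} and \eqref{eq:1d-lip} of \Cref{lem:1d-approx} to $f$ with $n=\lceil C_aLR\delta^{-1}\rceil$, which yields $L_f=C_aL$, $B_f=1+\abs{f(0)}$, and $C_f=4+3C_aLR$; the only difference is cosmetic: you take $R=\max\{M_f,1\}$ where the paper takes $R=M_f+1$, and either choice works because both satisfy $R\ge1$ and $R\ge M_f$.
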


\begin{proof}[Proof of \Cref{cor:trunc-surrogate}]
	Set $R:=M_f+1$, and let $C_a$ be the constant in \Cref{lem:1d-approx}. For
	$\delta\in(0,1]$, choose
	\[
	n:=\max\{1,\lceil C_aLR/\delta\rceil\}
	\]
	and apply \Cref{lem:1d-approx} to $f$. Since $f$ is constant on $(-\infty,-R]$ and on
	$[R,\infty)$, \eqref{eq:1d-approx-global} gives
	\[
	\sup_{v\in\R}\abs{f(v)-(\cR_a\mathfrak f_\delta)(v)}
	\le \frac{C_aLR}{n}\le\delta,
	\qquad
	\Lip(\cR_a\mathfrak f_\delta)\le C_aL.
	\]
	Consequently,
	\[
	\abs{(\cR_a\mathfrak f_\delta)(0)}\le\abs{f(0)}+1.
	\]
	Moreover, \eqref{eq:1d-architecture} and $\delta\le1$ yield
	\[
	\cD(\mathfrak f_\delta)=(1,n,1),\qquad
	n\le(1+C_aLR)\delta^{-1},\qquad
	\cP(\mathfrak f_\delta)=3n+1\le(4+3C_aLR)\delta^{-1}.
	\]
	The claimed constants may therefore be chosen as $L_f:=C_aL$, $B_f:=1+\abs{f(0)}$, and
	$C_f:=4+3C_aLR$. These estimates establish the $f$-parts of~\eqref{eq:nn-acc}--\eqref{eq:nn-cost}.
\end{proof}

\begin{lemma}[Ridge-sum approximation]\label{lem:ridge-surrogate}
	Let $a$ be an admissible sigmoidal activation in the sense of \Cref{lem:1d-approx}, let
	$J\in\N$, $L_\phi\in(0,\infty)$, and $M_\phi,T_\phi\in[0,\infty)$, and let
	$\phi_1,\dots,\phi_J\in C(\R,\R)$ satisfy
	\begin{equation}\label{eq:ridge-form}
		\Lip(\phi_j)\le L_\phi,\qquad
		\abs{\phi_j(0)}\le M_\phi,\qquad
		\mathrm{TV}(\phi_j)\le T_\phi
	\end{equation}
	for all $j\in\{1,\dots,J\}$. For every $d\in\N$, let $\alpha^d_1,\dots,\alpha^d_J\in\R^d$ and
	define
	\begin{equation}\label{eq:ridge-data}
		g_d(x):=\sum_{j=1}^{J}\phi_j\big(\langle\alpha^d_j,x\rangle\big)
		\qquad (x\in\R^d).
	\end{equation}
	Then there exists $C\in(0,\infty)$, depending only on $J$ and $T_\phi$, such that for all
	$d\in\N$ and $\delta\in(0,1]$ there is an FNN $\mathfrak g_{d,\delta}\in\bN$ of architecture
	$(d,n_{d,\delta},1)$ with $n_{d,\delta}\le C\delta^{-1}$ satisfying
	\begin{equation}\label{eq:ridge-bounds}
		\sup_{x\in\R^d}\abs{g_d(x)-(\cR_a\mathfrak g_{d,\delta})(x)}\le\delta,
		\qquad
		\cP(\mathfrak g_{d,\delta})\le C\,d\,\delta^{-1},
	\end{equation}
	and
	\begin{equation}\label{eq:ridge-sup}
		\sup_{x\in\R^d}\abs{(\cR_a\mathfrak g_{d,\delta})(x)}\le J(M_\phi+T_\phi)+1.
	\end{equation}
\end{lemma}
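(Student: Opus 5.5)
The plan is to approximate each ridge profile $\phi_j$ separately by the bounded-variation part \eqref{eq:1d-bv} of \Cref{lem:1d-approx}. Then we compose with the linear map $x\mapsto\langle\alpha^d_j,x\rangle$ and stack the $J$ resulting one-hidden-layer networks in parallel, summing their outputs in the last affine layer.

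Fix $d$ and $\delta\in(0,1]$, and set $\delta_J:=\delta/J$. For each $j$, \eqref{eq:1d-bv} yields $m_j\le\max\{1,\lceil 3T_\phi J/\delta\rceil-1\}\le (3T_\phi J+1)\delta^{-1}$ and an FNN $\phi_{j,\delta_J}$ with $\cD=(1,m_j,1)$ and $\sup_v\abs{\phi_j(v)-(\cR_a\phi_{j,\delta_J})(v)}\le\delta_J$. Write $\phi_{j,\delta_J}=((w_j,b_j),(c_j,e_j))$ with $w_j,b_j\in\R^{m_j}$, $c_j\in\R^{1\times m_j}$, $e_j\in\R$. Then define $\mathfrak g_{d,\delta}$ with hidden width $n_{d,\delta}:=\sum_j m_j\le J(3T_\phi J+1)\delta^{-1}=:C\delta^{-1}$ as follows:
\begin{itemize}
\item the first weight matrix stacks the rows $w_j(\alpha^d_j)^{\top}\in\R^{m_j\times d}$, with bias the stacked $b_j$;
\item the output row is $(c_1,\dots,c_J)$, with bias $\sum_j e_j$.
\end{itemize}
By construction $(\cR_a\mathfrak g_{d,\delta})(x)=\sum_j(\cR_a\phi_{j,\delta_J})(\langle\alpha^d_j,x\rangle)$, which is exact since only one activation layer occurs. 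The triangle inequality then gives the uniform bound $J\delta_J=\delta$ in \eqref{eq:ridge-bounds}. The parameter count is $n_{d,\delta}(d+1)+n_{d,\delta}+1\le 3d\,n_{d,\delta}+1$, which is at most $(3C+1)d\delta^{-1}$ after enlarging $C$ (still depending only on $J,T_\phi$).

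For \eqref{eq:ridge-sup}, observe that $\abs{\phi_j(v)}\le\abs{\phi_j(0)}+\abs{\phi_j(v)-\phi_j(0)}\le M_\phi+\mathrm{TV}(\phi_j)\le M_\phi+T_\phi$ for all $v$. Hence $\sup\abs{g_d}\le J(M_\phi+T_\phi)$, and adding the uniform error $\delta\le1$ gives the claim.

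The main obstacle is to keep the constant free of $L_\phi$, $M_\phi$, $d$ and $\alpha^d_j$. This is why the BV staircase part of \Cref{lem:1d-approx} is used rather than the Lipschitz part \eqref{eq:1d-approx-in}, whose width would depend on $R$ and hence on $\abs{\alpha^d_j}$. The only remaining care is the degenerate case $T_\phi=0$, where $m_j=1$ still gives $m_j\le\delta^{-1}$; this is why the constant is taken as $3T_\phi J+1$ rather than $3T_\phi J$.
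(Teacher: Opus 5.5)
Your proposal is correct and follows essentially the same route as the paper: apply the bounded-variation part of the one-dimensional sigmoidal lemma to each $\phi_j$ with accuracy $\delta/J$, stack the first layers lifted by $(\alpha^d_j)^{\mathsf T}$, and sum the output layers and biases. You then bound the realization via $\abs{\phi_j(v)}\le\abs{\phi_j(0)}+\mathrm{TV}(\phi_j)$ plus the error $\delta\le1$. The only cosmetic difference is that you keep the individual widths $m_j$ instead of padding to a common width, which gives a slightly different but equally admissible constant depending only on $J$ and $T_\phi$.
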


\begin{proof}[Proof of \Cref{lem:ridge-surrogate}]
	Fix $d\in\N$ and $\delta\in(0,1]$, and set $\delta_0:=\delta/J$. By the final assertion of
	\Cref{lem:1d-approx}, for every $j\in\{1,\dots,J\}$ there exists an FNN $\phi_{j,\delta_0}\in\bN$
	of architecture $(1,m_j,1)$ with
	\[
	\sup_{v\in\R}\abs{\phi_j(v)-(\cR_a\phi_{j,\delta_0})(v)}\le\delta_0,
	\qquad
	m_j\le\max\Big\{1,\Big\lceil\frac{3JT_\phi}{\delta}\Big\rceil-1\Big\}.
	\]
	By adding zero rows to $W^{(1)}_j$, zero entries to $B^{(1)}_j$, and matching zero columns to
	$W^{(2)}_j$, which changes neither the realization nor the order of the parameter count, we may
	assume that all these networks have the common architecture $(1,n,1)$ with $n:=\max\{1,\lceil3JT_\phi/\delta\rceil\}$. Write
	$\phi_{j,\delta_0}=\big((W^{(1)}_j,B^{(1)}_j),(W^{(2)}_j,B^{(2)}_j)\big)$ with
	$W^{(1)}_j\in\R^{n\times1}$, and define an FNN of architecture $(d,Jn,1)$ by stacking the lifted
	first layers and concatenating the output layers:
	\[
	\mathfrak g_{d,\delta}:=\big((W^{(1)},B^{(1)}),(W^{(2)},B^{(2)})\big),
	\qquad
	W^{(1)}:=\begin{pmatrix}W^{(1)}_1(\alpha^d_1)^{\mathsf T}\\ \vdots\\ W^{(1)}_J(\alpha^d_J)^{\mathsf T}\end{pmatrix},
	\qquad
	B^{(1)}:=\begin{pmatrix}B^{(1)}_1\\ \vdots\\ B^{(1)}_J\end{pmatrix},
	\]
	\[
	W^{(2)}:=\big(W^{(2)}_1\ \cdots\ W^{(2)}_J\big),
	\qquad
	B^{(2)}:=\sum_{j=1}^{J}B^{(2)}_j.
	\]
	Since the $j$th block of hidden units receives the input
	$W^{(1)}_j\langle\alpha^d_j,x\rangle+B^{(1)}_j$, \Cref{def:fnn} gives, for every $x\in\R^d$,
	\begin{equation}\label{eq:gd-sum}
		(\cR_a\mathfrak g_{d,\delta})(x)=\sum_{j=1}^{J}(\cR_a\phi_{j,\delta_0})(\langle\alpha^d_j,x\rangle),
	\end{equation}
	and therefore
	\begin{equation}\label{eq:gd-err-final}
		\sup_{x\in\R^d}\abs{g_d(x)-(\cR_a\mathfrak g_{d,\delta})(x)}
		\le\sum_{j=1}^{J}\sup_{v\in\R}\abs{\phi_j(v)-(\cR_a\phi_{j,\delta_0})(v)}
		\le J\delta_0=\delta.
	\end{equation}
	Moreover, $n_{d,\delta}=Jn$, and since $\delta\le1$ gives $n\le(3JT_\phi+1)\delta^{-1}$,
	\begin{equation}\label{eq:gd-cost-final}
		\cP(\mathfrak g_{d,\delta})=Jn(d+1)+(Jn+1)=Jn(d+2)+1\le4Jnd
		\le4J\big(3JT_\phi+1\big)\,d\,\delta^{-1},
	\end{equation}
	which proves~\eqref{eq:ridge-bounds} with $C:=4J(3JT_\phi+1)$. Finally,
	$\sup_{v\in\R}\abs{\phi_j(v)}\le\abs{\phi_j(0)}+\mathrm{TV}(\phi_j)\le M_\phi+T_\phi$, so
	that~\eqref{eq:ridge-data} and~\eqref{eq:gd-err-final} give~\eqref{eq:ridge-sup}.
\end{proof}

Together with \Cref{cor:trunc-surrogate}, \Cref{lem:ridge-surrogate} verifies \Cref{set:nn} with
$p=\kappa=1$ for globally Lipschitz truncations and the ridge-sum initial conditions above:
the $f$-parts of~\eqref{eq:nn-acc}--\eqref{eq:nn-cost} are~\eqref{eq:trunc-surrogate}, and the
$g$-parts follow from~\eqref{eq:ridge-bounds}--\eqref{eq:ridge-sup} since $1+\norm x^\kappa\ge1$
and $d\ge1$, after enlarging $C_g$ to absorb $C_f$ and $C$.

\begin{remark}[Breadth and limitations of the ridge-sum class]\label{rem:applications}
\Cref{lem:ridge-surrogate} covers shallow-network initial conditions
$g_d(x)=\sum_{j=1}^{J}c_j\,b(\langle\alpha^d_j,x\rangle+\beta_j)$ with a Lipschitz profile $b$ of
finite total variation and $J$ independent of $d$, including single-index models ($J=1$) and the
Allen--Cahn-type datum of \Cref{rem:allen-cahn-example}, with no condition on the direction
vectors. It does not cover a general radial function $g_d(x)=\psi(\norm{x})$ or a generic Lipschitz
function on $\R^d$; the lower bounds of~\cite{GrohsIbragimovJentzenKoppensteiner2023,GrohsVoigtlaender2024}
discussed in \Cref{sec:rigorous} show that such restrictions cannot be removed in general, and
verifying \Cref{set:nn} for further structured classes (tensor products, hierarchical compositions,
Barron-type representations) is left to future research. An unbounded basket payoff
$(K-\langle w^d,x\rangle)_+$ is not of finite total variation, but a bounded Lipschitz truncation
of it is covered, with no condition on $w^d$, provided the truncated profiles have
dimension-uniform Lipschitz, value, and total-variation bounds; comparing with the PDE for the
original payoff then requires a separate Gaussian tail estimate, since agreement of the initial
conditions on the test cube alone is not sufficient for the nonlocal heat flow.
\end{remark}

\subsection{ResNet representation of an MLP sample}

The representation uses the last coordinate of the state as a scalar accumulator. Each
accumulator update adds one summand of the MLP formula to this coordinate; depending on the
summand, an update consists of a single residual block or a short concatenation of such blocks,
and each shortcut transmits the
spatial variable. The resulting blocks are combined by concatenation. Consequently, the
construction requires no common-width padding.

\begin{lemma}[Atomic accumulator blocks]\label{lem:atoms}
Let $a\in C(\R,\R)$.
\begin{enumerate}[label=\textup{(\alph*)},leftmargin=2.2em,itemsep=3pt,topsep=2pt]
\item Let $r,D\in\N$ with $r\le D$, let $\varphi\in\bN$ have input dimension $r$ and scalar output,
let $P\in\R^{r\times D}$, $b\in\R^r$, $e\in\R^D$, and $h\in\R$. There exists a one-block ResNet
$\mathcal A_{\varphi,P,b,e,h}\in\bbN_1$ of input/output dimension $D$ such that
\begin{equation}\label{eq:atomic-add}
   \Real_a(\mathcal A_{\varphi,P,b,e,h})(z)
   =z+h e\,(\cR_a\varphi)(Pz+b),
   \qquad z\in\R^D,
\end{equation}
and
\begin{equation}\label{eq:atomic-add-cost}
   \Param(\mathcal A_{\varphi,P,b,e,h})
   \le C\big(D\,\cP(\varphi)+D^2\big)
\end{equation}
for a universal $C\in(0,\infty)$. If, in addition,
$\cD(\varphi)=(D-1,m,1)$ and $P=(\id_{D-1}\mid0)$, then the sharper estimate
\begin{equation}\label{eq:atomic-add-cost-sharp}
   \Param(\mathcal A_{\varphi,P,b,e,h})
   \le C\big(\cP(\varphi)+D^2\big)
\end{equation}
holds.
\item For every $d\in\N$ and $b\in\R^d$ there exist one-block ResNets $\mathcal I_b$ and
$\mathcal O_b$ whose input-output dimensions are, respectively, $(d+1,d+2)$ and $(d+2,d+1)$,
such that
\begin{equation}\label{eq:init-restore}
  \Real_a(\mathcal I_b)(x,s)=(x+b,0,s),
  \qquad
  \Real_a(\mathcal O_b)(y,w,s)=(y-b,s),
\end{equation}
and
$\Param(\mathcal I_b)+\Param(\mathcal O_b)\le C d^2$.
\end{enumerate}
\end{lemma}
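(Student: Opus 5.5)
We construct both parts explicitly. The only structural fact we use is that in \Cref{def:fnn} the last layer of an FNN is affine and carries no activation. As a consequence, a depth-one FNN realizes an arbitrary affine map for \emph{any} $a\in C(\R,\R)$, and the input and output affine maps of a deeper FNN can be absorbed into its first and last layers. No identity representation through $a$ is needed.

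For part (a), write $\varphi=((W_1,B_1),\dots,(W_L,B_L))$ with architecture $(r,m_1,\dots,m_{L-1},1)$. We take shortcut $\Gamma=\id_D$ and define the branch $\theta$ as follows.
\begin{itemize}
\item If $L\ge2$, $\theta$ has first layer $(W_1P,\;W_1b+B_1)\in\R^{m_1\times D}\times\R^{m_1}$, unchanged intermediate layers, and last layer $(heW_L,\;heB_L)\in\R^{D\times m_{L-1}}\times\R^D$.
\item If $L=1$, $\theta$ is the single affine layer $(heW_1P,\;he(W_1b+B_1))$.
\end{itemize}
Unwinding \Cref{def:fnn} gives $(\cR_a\theta)(z)=he\,(\cR_a\varphi)(Pz+b)$, and then \eqref{eq:skip} yields \eqref{eq:atomic-add}.

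For the cost \eqref{eq:atomic-add-cost}, we count each piece separately.
\begin{itemize}
\item The shortcut contributes $D^2$.
\item The new first layer costs $m_1(D+1)\le D\,m_1(r+1)\le D\,\cP(\varphi)$.
\item The intermediate layers cost at most $\cP(\varphi)$.
\item The new last layer costs $D(m_{L-1}+1)\le D\,\cP(\varphi)$.
\item In the depth-one case the branch costs $D(D+1)\le 2D^2$.
\end{itemize}
Altogether, $C=4$ suffices.

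For the sharp bound \eqref{eq:atomic-add-cost-sharp}, we have $r=D-1$ and $L=2$, so $\cP(\varphi)=mD+m+1$. The first layer costs $m(D+1)\le 2mD\le2\cP(\varphi)$. The last layer costs $D(m+1)=Dm+D\le\cP(\varphi)+D^2$. Adding the shortcut $D^2$ gives $\Param\le 3\cP(\varphi)+2D^2$.

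For part (b), both networks use depth-one (purely affine) branches with zero weights, so that only the bias acts.
\begin{itemize}
\item For $\mathcal I_b$, we take $\Gamma\in\R^{(d+2)\times(d+1)}$ mapping $(x,s)\mapsto(x,0,s)$. The branch is the single layer with weight $0\in\R^{(d+2)\times(d+1)}$ and bias $(b,0,0)$. By \eqref{eq:skip}, the realization is $(x+b,0,s)$.
\item For $\mathcal O_b$, we take $\Gamma\in\R^{(d+1)\times(d+2)}$ mapping $(y,w,s)\mapsto(y,s)$. The branch is the single layer with zero weight and bias $(-b,0)$.
\end{itemize}
The parameter counts are $(d+2)(d+1)+(d+2)(d+2)$ and $(d+1)(d+2)+(d+1)(d+3)$, which together are at most $Cd^2$ with, for instance, $C=50$ (since $d\ge1$).

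There is no real obstacle here. The only point needing care is the bookkeeping in the sharp estimate. There the output layer must be widened to dimension $D$, which costs $D(m+1)$; the excess over $\cP(\varphi)$ is exactly $D$, and this is what the $D^2$ term absorbs.
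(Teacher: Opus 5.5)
Your proposal is correct and follows the paper's proof essentially verbatim. It uses the identity shortcut, absorbs $z\mapsto Pz+b$ and $y\mapsto hey$ into the first and last affine layers of $\varphi$ (or uses a single dense affine layer when $\mathcal L(\varphi)=1$), and in part (b) uses constant affine branches with the coordinate-inserting and coordinate-deleting shortcuts; your explicit constants ($C=4$ in (a), $3\cP(\varphi)+2D^2$ in the sharp case, $C=50$ in (b)) just make the paper's unspecified $C$ concrete. One cosmetic slip: in your closing remark, the excess of $D(m+1)$ over $\cP(\varphi)=mD+m+1$ is $D-m-1\le D$, not exactly $D$, which changes nothing in the estimate.
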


\begin{proof}[Proof of \Cref{lem:atoms}]
For part~(a), write $\cD(\varphi)=(r,m_1,\dots,m_{L-1},1)$. If $L=1$, then
$\cR_a\varphi(v)=Wv+B$ is affine. The map
\[
   z\longmapsto he\big(W(Pz+b)+B\big)
\]
is the realization of an affine FNN $\widehat\varphi\colon\R^D\to\R^D$ with
$D(D+1)$ parameters. If $L\ge2$, replace the first affine pair $(W_1,B_1)$ by
$(W_1P,W_1b+B_1)$, replace the last affine pair $(W_L,B_L)$ by
$(heW_L,heB_L)$, and leave all intermediate layers unchanged. The resulting FNN
$\widehat\varphi\colon\R^D\to\R^D$ has
\[
   (\cR_a\widehat\varphi)(z)=he\,(\cR_a\varphi)(Pz+b).
\]
Because $1\le r\le D$, the new first layer has at most $D$ times as many parameters as the old first
layer, the new last layer has exactly $D$ times as many parameters as the old last layer, and the
intermediate layers are unchanged. Hence
$\cP(\widehat\varphi)\le D\cP(\varphi)$ for $L\ge2$, while the affine case is bounded by $D(D+1)$.
Let the shortcut of the single residual block be $\id_D$. This proves
\eqref{eq:atomic-add} and, after adding the $D^2$ shortcut parameters,
\eqref{eq:atomic-add-cost}. If $\cD(\varphi)=(D-1,m,1)$ and
$P=(\id_{D-1}\mid0)$, then $\widehat\varphi$ has architecture $(D,m,D)$ and
\[
   \cP(\widehat\varphi)=m(D+1)+D(m+1)
   \le C\big(\cP(\varphi)+D^2\big).
\]
Adding the shortcut parameters gives~\eqref{eq:atomic-add-cost-sharp}.

For part~(b), use shortcuts
\[
  (x,s)\longmapsto(x,0,s),
  \qquad
  (y,w,s)\longmapsto(y,s),
\]
and constant residual FNNs with values $(b,0,0)$ and $(-b,0)$, respectively. The realizations are
\eqref{eq:init-restore}, and the two dense affine blocks and shortcuts contain at most $Cd^2$
parameters.
\end{proof}

\begin{proposition}[MLP-sample accumulator representation]\label{prop:repr}
Assume \Cref{set:nn} and use the probability space and random variables of \Cref{set:mlp}. Fix
$d\in\N$, $\delta\in(0,1]$, $M,N\in\N$, and a realization $\omega\in\Omega$. For
$\vartheta\in\mathfrak T$ and $n\in\N_0$, let $U^{\vartheta,\delta}_{n,M}$ denote the MLP
estimator~\eqref{eq:mlp} obtained by taking
$g_2=\cR_a\mathfrak g_{d,\delta}$ and $f_2=\cR_a\mathfrak f_\delta$; thus $\delta$ here is the
surrogate accuracy and is unrelated to the perturbation parameter $\Delta$ of \Cref{set:mlp}.
Then there exists a ResNet
$\Psi^\omega_{N,M,\delta}\in\bbN$ such that
\begin{equation}\label{eq:repr-real}
  (\Real_a\Psi^\omega_{N,M,\delta})(x)
  =U^{0,\delta}_{N,M}(0,x)(\omega),
  \qquad x\in\R^d.
\end{equation}
More precisely, for every occurrence of a level-$n$ subestimator in the recursion, specified by
its multi-index $\vartheta\in\mathfrak T$, initial time $\tau\in[0,T]$, and the fixed realization
$\omega$, there exists an accumulator ResNet
$\mathcal A^{(n,\vartheta,\tau,\omega)}\in\bbN$ with
$\mathcal I(\mathcal A^{(n,\vartheta,\tau,\omega)})
=\mathcal O(\mathcal A^{(n,\vartheta,\tau,\omega)})=d+1$ and satisfying
\begin{equation}\label{eq:accumulator-real}
   \Real_a\!\left(\mathcal A^{(n,\vartheta,\tau,\omega)}\right)(x,s)
   =\bigl(x,s+U^{\vartheta,\delta}_{n,M}(\tau,x)(\omega)\bigr).
\end{equation}
\end{proposition}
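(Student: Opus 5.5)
We argue by strong induction on the level $n\in\N_0$ and prove the accumulator statement~\eqref{eq:accumulator-real} simultaneously for all multi-indices $\vartheta$ and initial times $\tau$. The global representation~\eqref{eq:repr-real} then follows by one preprocessing and one postprocessing block. At level $n=0$ we have $U^{\vartheta,\delta}_{0,M}=0$. We take $\mathcal A^{(0,\vartheta,\tau,\omega)}$ to be a single block $\mathcal A_{\varphi,P,b,e,h}$ from \Cref{lem:atoms}(a) with $h=0$, so that its realization is the identity on $\R^{d+1}$. Note that the identity shortcut realizes this without requiring $a$ to represent the identity.

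For the inductive step, fix $n\ge1$, $\vartheta$ and $\tau$, and write the right-hand side of~\eqref{eq:mlp} at $(\tau,x)$ as a finite sum of summands. We build one accumulator update per summand and concatenate the updates by \Cref{lem:resnet-comp}(i). Concatenation adds realizations in the last coordinate, because each update maps $(x,s)\mapsto(x,s+\text{summand}(x))$.

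\emph{Terminal summands.} The term $M^{-n}\,g_2(x+W^{(\vartheta,0,-j)}_T(\omega)-W^{(\vartheta,0,-j)}_\tau(\omega))$ is the single block $\mathcal A_{\mathfrak g_{d,\delta},P,b,e,h}$ from \Cref{lem:atoms}(a). Here $D=d+1$, $P=(\id_d\mid0)$, $b$ is the fixed Brownian increment, $e$ is the last unit vector, and $h=M^{-n}$.

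\emph{Nonlinear summands.} For each $l$ and $j$, set $t'=\cU^{(\vartheta,l,j)}_\tau(\omega)$ and $y=x+b'$, where $b'=W^{(\vartheta,l,j)}_{t'}(\omega)-W^{(\vartheta,l,j)}_\tau(\omega)$. We must add
\[
h\,f_2\bigl(U^{(\vartheta,l,j),\delta}_{l,M}(t',y)(\omega)\bigr),\qquad h=\frac{T-\tau}{M^{n-l}}.
\]
The plan is the following concatenation:
\[
\mathcal O_{b'}\ast\mathcal A_{\mathfrak f_\delta,P',0,e,h}\ast\operatorname{Lift}\bigl(\mathcal A^{(l,(\vartheta,l,j),t',\omega)}\bigr)\ast\mathcal I_{b'}.
\]
The map $\mathcal I_{b'}$ from \Cref{lem:atoms}(b) sends $(x,s)\mapsto(x+b',0,s)$, which stores the old accumulator $s$ in the final coordinate. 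Since $l<n$, the induction hypothesis supplies the inner accumulator for the level-$l$ subestimator. \Cref{lem:resnet-lift} extends it to act on $(y,0)$ while carrying $s$ along, which produces $(y,U,s)$. Next, \Cref{lem:atoms}(a) is applied with $D=d+2$, with $P'$ selecting the coordinate $d+1$, with $e$ equal to the last unit vector, and with $\varphi=\mathfrak f_\delta$; this yields $(y,U,s+hf_2(U))$. Finally, $\mathcal O_{b'}$ discards $U$ and undoes the shift, returning $(x,s+hf_2(U))$. The subtracted term $-\mathbf 1_\N(l)f_2\circ U_{l-1}$ is handled the same way, using the index $(\vartheta,-l,j)$, level $l-1$ and $h$ replaced by $-h$. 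The level-$(l-1)$ accumulator exists by induction; when $l-1=0$ it is the identity, so the summand correctly reduces to a constant $f_2(0)$.

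The main obstacle is the bookkeeping in this nested step. Its realization must be exactly right: the inner accumulator has to start from a fresh zero coordinate rather than from $s$. We achieve this by inserting the zero slot through $\mathcal I_{b'}$ and protecting $s$ with \Cref{lem:resnet-lift}. The identity shortcut in \Cref{lem:atoms}(a) is essential, since it is what lets $U$ and $y$ persist in the state. Parameter counts are irrelevant here and are deferred to \Cref{lem:prec}.

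To conclude~\eqref{eq:repr-real}, take $\Psi^\omega_{N,M,\delta}$ to be the concatenation of three pieces. The first is an initial block $\R^d\to\R^{d+1}$, $x\mapsto(x,0)$, given by a shortcut matrix together with a zero residual branch. The second is $\mathcal A^{(N,0,0,\omega)}$. The third is a final block $\R^{d+1}\to\R$, $(x,s)\mapsto s$, given by a projection shortcut with a zero branch. By \Cref{lem:resnet-comp}(i), this composite realizes $x\mapsto U^{0,\delta}_{N,M}(0,x)(\omega)$.
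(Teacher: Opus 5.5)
Your proposal is correct and follows the paper's proof essentially step for step. You use induction on the level, uniformly over all $(\vartheta,\tau)$. Each terminal summand gets one \Cref{lem:atoms}(a) block with $P=(\id_d\mid0)$, and each nonlinear summand gets the sandwich $\mathcal I_{b'}$, $\operatorname{Lift}$ of the inner accumulator, the reaction atom on $\R^{d+2}$, then $\mathcal O_{b'}$, with the correction terms treated identically. The embedding and readout blocks around $\mathcal A^{(N,0,0,\omega)}$ also match. The only difference is cosmetic: your base case uses the atom with $h=0$ rather than an explicit zero residual branch, and these give the same block.
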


\begin{proof}[Proof of \Cref{prop:repr}]
Fix $\omega$ and suppress it from the notation. Every randomized time and Brownian increment in
\eqref{eq:mlp} is then fixed. We prove~\eqref{eq:accumulator-real} by induction on $n$, uniformly
over all occurrences $(\vartheta,\tau)$; when no confusion is possible, we suppress these indices
from the accumulator notation.

For $n=0$, the estimator is zero, and the one-block ResNet on $\R^{d+1}$ with shortcut
$\id_{d+1}$ and zero residual FNN satisfies~\eqref{eq:accumulator-real}.

Let $n\ge1$ and assume the assertion for all lower levels. Consider~\eqref{eq:mlp} at a fixed time
$\tau$. Its first sum contains $M^n$ summands of the form
\[
   M^{-n}(\cR_a\mathfrak g_{d,\delta})(x+\beta).
\]
For each such summand, \Cref{lem:atoms}(a), applied with $D=d+1$, $r=d$,
$P=(\id_d\mid0)$, $b=\beta$, $e=e_{d+1}$, and $h=M^{-n}$, provides a residual block which leaves
$x$ unchanged and adds the displayed quantity to the accumulator $s$.

Next, consider a summand of the form
\begin{equation}\label{eq:f-leaf-generic}
   h\,(\cR_a\mathfrak f_\delta)\big(U^{\vartheta',\delta}_{l,M}(\tau',x+b)\big),
\end{equation}
where $l<n$, $\vartheta'$ is the multi-index of this inner occurrence, and $h$ is the fixed
coefficient, positive or negative, appearing in~\eqref{eq:mlp}.
The induction hypothesis establishes the existence of an accumulator ResNet on $\R^{d+1}$ carrying
the multi-index and time of this particular inner estimator. Denote it temporarily by
$\mathcal A^{(l)}$. The ResNet $\mathcal I_b$ maps $(x,s)$ to
$(y,w,s)=(x+b,0,s)$. The one-coordinate lift of $\mathcal A^{(l)}$ furnished by
\Cref{lem:resnet-lift} maps this state to
$(y,U^{\vartheta',\delta}_{l,M}(\tau',y),s)$. Thereafter,
\Cref{lem:atoms}(a), applied with $D=d+2$, $r=1$, $b=0$, with $P$ selecting the coordinate $w$, and with
$e=e_{d+2}$, adds $h(\cR_a\mathfrak f_\delta)(w)$ to the final coordinate. The ResNet
$\mathcal O_b$ then maps the state to
$(x,s+h(\cR_a\mathfrak f_\delta)(U^{\vartheta',\delta}_{l,M}(\tau',x+b)))$. Consequently,
\Cref{lem:resnet-comp} shows that the concatenation of these four ResNets realizes the update associated with
\eqref{eq:f-leaf-generic}. The correction terms involving
$U^{\vartheta'',\delta}_{l-1,M}$, which occur only for $l\in\{1,\dots,n-1\}$, are handled
identically, with the multi-index and time belonging to that occurrence.

All the resulting updates have input and output dimension $d+1$. Concatenating them therefore
adds every summand of~\eqref{eq:mlp} to the same accumulator and leaves the spatial variable unchanged.
This proves~\eqref{eq:accumulator-real} at level $n$.

Finally, concatenating the linear one-block ResNet $x\mapsto(x,0)$, the ResNet
$\mathcal A^{(N,0,0,\omega)}$, and the linear one-block readout $(x,s)\mapsto s$ yields the ResNet
$\Psi^\omega_{N,M,\delta}$ which satisfies~\eqref{eq:repr-real}.
\end{proof}

\subsection{The parameter recursion}

For arbitrary surrogate depths define
\begin{equation}\label{eq:Bdd-hat}
   \widehat B_{d,\delta}
   :=d\big(\cP(\mathfrak g_{d,\delta})+\cP(\mathfrak f_\delta)\big)+c_0d^2,
\end{equation}
where $c_0\in(0,\infty)$ is universal. If $\mathfrak g_{d,\delta}$ has one hidden layer, as in
\Cref{lem:ridge-surrogate}, set instead
\begin{equation}\label{eq:Bdd}
   B_{d,\delta}
   :=\cP(\mathfrak g_{d,\delta})+d\,\cP(\mathfrak f_\delta)+c_0d^2.
\end{equation}
The sharper coefficient of the $g$-surrogate follows from~\eqref{eq:atomic-add-cost-sharp}.

\begin{lemma}[Parameter recursion]\label{lem:prec}
Let $d\in\N$ and $\delta\in(0,1]$. There exist universal constants $c_1\in(0,\infty)$ and
$K\in[1,\infty)$ and, for every $M\in\N\cap[2,\infty)$ and $n\in\N_0$, a deterministic bound
$p_{n,M}\in[0,\infty)$ on the number of parameters of every level-$n$ accumulator constructed with
branching parameter $M$ in the proof of \Cref{prop:repr}, uniformly over all multi-indices, initial
times, Brownian increments, and random times (these values change only the weights and biases,
not the architecture), such that, for every $n\in\N$,
\begin{equation}\label{eq:prec}
\begin{aligned}
  p_{n,M}&\le c_1\Big[M^n\mathcal B_{d,\delta}
      +\sum_{l=0}^{n-1}M^{n-l}\big(p_{l,M}+\mathcal B_{d,\delta}\big)\Big],\\
  p_{0,M}&\le c_1\mathcal B_{d,\delta}.
\end{aligned}
\end{equation}
Here $\mathcal B_{d,\delta}=\widehat B_{d,\delta}$ in general. For the one-hidden-layer
$g$-surrogates of \Cref{lem:ridge-surrogate}, one may take
$\mathcal B_{d,\delta}=B_{d,\delta}$. Consequently,
\begin{equation}\label{eq:prec-closed}
   p_{N,M}\le c_1\,\mathcal B_{d,\delta}\,(KM)^N,
   \qquad M\in\N\cap[2,\infty),\ N\in\N,
\end{equation}
and the scalar-output ResNet of \Cref{prop:repr} satisfies the same bound after enlarging $c_1$.
\end{lemma}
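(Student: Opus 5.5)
The proof has three parts. First, I bound the cost of each elementary update in the construction of \Cref{prop:repr}. Second, I sum these costs along the recursion to obtain \eqref{eq:prec}. Third, I close the recursion by induction to get \eqref{eq:prec-closed}.

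Throughout, I use that the architecture of every accumulator is fixed once $(n,M)$ and the surrogate networks $\mathfrak g_{d,\delta},\mathfrak f_\delta$ are fixed. The realization $\omega$, the index $\vartheta$ and the time $\tau$ enter only through the shift vectors $b$ and the coefficients $h$. These appear as biases and weights in \Cref{lem:atoms}, so the parameter count of a level-$n$ accumulator depends on none of them. I may therefore define $p_{n,M}$ as the maximum count over the finitely many architectures that actually occur. For $n=0$, the zero block on $\R^{d+1}$ costs $(d+1)(d+2)+(d+1)^2\le 6d^2$. This is at most $c_1\mathcal B_{d,\delta}$ once $c_1c_0\ge6$.

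For $n\ge1$, I count the updates in \eqref{eq:mlp} one at a time.

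\emph{$g$-updates.} There are $M^n$ of them. Each is a single block from \Cref{lem:atoms}(a) with $D=d+1$ and $r=d$. By \eqref{eq:atomic-add-cost} it costs at most $C((d+1)\cP(\mathfrak g_{d,\delta})+(d+1)^2)\le 4C\widehat B_{d,\delta}/c_0$ when $c_0\ge1$. In the one-hidden-layer case, $\cD(\mathfrak g_{d,\delta})=(d,m,1)$ and $P=(\id_d\mid 0)$, so \eqref{eq:atomic-add-cost-sharp} applies and the cost is bounded by a multiple of $\cP(\mathfrak g_{d,\delta})+d^2\le B_{d,\delta}$.

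\emph{$f$-updates.} For each $l\in\{0,\dots,n-1\}$ there are at most $2M^{n-l}$ of them: one main term and one correction term. Each is the concatenation of $\mathcal I_b$, $\operatorname{Lift}(\mathcal A^{(l')})$ with $l'\in\{l,l-1\}$, an atom with $D=d+2$ and $r=1$, and $\mathcal O_b$. By \Cref{lem:resnet-comp}(ii) the costs add. \Cref{lem:atoms}(b) bounds $\Param(\mathcal I_b)+\Param(\mathcal O_b)$ by $Cd^2$. \Cref{lem:resnet-lift} bounds the lifted accumulator by $4\max\{p_{l,M},p_{l-1,M}\}$. The atom costs at most $C((d+2)\cP(\mathfrak f_\delta)+(d+2)^2)$, which is a multiple of $\mathcal B_{d,\delta}$ in both cases, since both $\widehat B_{d,\delta}$ and $B_{d,\delta}$ contain $d\,\cP(\mathfrak f_\delta)+c_0d^2$.

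The term $p_{l-1,M}$ needs care. I handle it by setting $\bar p_{l}:=\max_{k\le l}p_{k,M}$. I then prove \eqref{eq:prec} with $p_{l,M}$ replaced by $\bar p_l$, which is what the induction actually uses. Alternatively, I first show directly that $p_{n,M}$ is nondecreasing in $n$, because the level-$(n+1)$ recursion contains a superset of the level-$n$ blocks. Summing all contributions then gives \eqref{eq:prec} with a universal $c_1$, say $c_1=8C+8$.

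Closing the recursion is the step I expect to be the main obstacle: the constant $K$ must be universal, so it cannot absorb a factor $n$. I claim $p_{n,M}\le c_1\mathcal B_{d,\delta}(KM)^n$ with $K=2c_1+2$. For the inductive step I insert the hypothesis for $l<n$ into \eqref{eq:prec}:
\[
p_{n,M}\le c_1\mathcal B_{d,\delta}\Big[M^n+\sum_{l=0}^{n-1}M^{n-l}\big(c_1(KM)^l+1\big)\Big].
\]
The terms without $c_1$ inside the bracket sum to at most $2M^n\le 2(KM)^n/K$, using $M\ge2$ for the geometric sum $\sum_{l}M^{n-l}\le 2M^n$. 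For the remaining terms,
\[
c_1\sum_{l=0}^{n-1}M^{n-l}(KM)^l=c_1M^n\sum_{l=0}^{n-1}K^l\le c_1M^n\frac{K^n}{K-1}.
\]
The full bracket is therefore at most $(KM)^n\bigl(2/K+c_1/(K-1)\bigr)\le(KM)^n$ for $K=2c_1+2$.

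Finally, the scalar-output network of \Cref{prop:repr} adds the two linear blocks $x\mapsto(x,0)$ and $(x,s)\mapsto s$. Together they cost $O(d^2)\le O(\mathcal B_{d,\delta})$, which is absorbed by enlarging $c_1$.
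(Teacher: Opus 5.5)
Your proof is correct and follows the paper's route: per-update bounds from \Cref{lem:atoms,lem:resnet-lift}, exact additivity from \Cref{lem:resnet-comp}(ii), the geometric sum for $M\ge2$, and a discrete Gr\"onwall closure with universal $K$. The differences are cosmetic. The paper absorbs the $p_{l-1,M}$ correction terms by reindexing $j=l-1$ and using $M^{n-j-1}\le M^{n-j}$, which gives \eqref{eq:prec} literally without needing $\bar p_l$ or monotonicity. It then closes via $q_{n,M}=p_{n,M}/(M^n\mathcal B_{d,\delta})$ and the partial sums $b_{n,M}$, whereas you use direct induction, which has the small advantage of keeping the same $c_1$ in \eqref{eq:prec-closed}.

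There are two harmless arithmetic slips. First, the $c_1$-free terms in your bracket satisfy $M^n+\sum_{l=0}^{n-1}M^{n-l}\le 3M^n$, not $2M^n$; the closure still works, since $3/K+c_1/(K-1)\le1$ for $K=2c_1+2$ as soon as $c_1\ge2$. Second, $(d+1)(d+2)+(d+1)^2\le 10d^2$, not $6d^2$; the latter fails at $d=1$.
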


\begin{proof}[Proof of \Cref{lem:prec}]
Fix $M\in\N\cap[2,\infty)$. By \Cref{lem:atoms}(a), an update associated with a summand containing
$g$ has at most
$C\mathcal B_{d,\delta}$ parameters. For an update associated with a summand containing
$f\circ U_l$, the initialization and restoration blocks together have at most $Cd^2$ parameters,
the lifted level-$l$ accumulator has at most $4p_{l,M}$ parameters by \Cref{lem:resnet-lift}, and the
reaction block has at most $C(d\cP(\mathfrak f_\delta)+d^2)$ parameters. Hence, every such update
has at most $C(p_{l,M}+\mathcal B_{d,\delta})$ parameters.

There are $M^n$ summands involving $g$ and $M^{n-l}$ summands of the first nonlinear type at level
$l$. Reindexing the correction terms with $j=l-1$ and using $M^{n-j-1}\le M^{n-j}$ shows that
they contribute at most $\sum_{j=0}^{n-1}M^{n-j}(p_{j,M}+\mathcal B_{d,\delta})$. Because concatenation adds parameter counts exactly, \Cref{lem:resnet-comp}(ii) gives
\eqref{eq:prec}. The identity accumulator in the base case has at most $Cd^2$ parameters.

Let $q_{n,M}:=p_{n,M}/(M^n\mathcal B_{d,\delta})$. Dividing~\eqref{eq:prec} by
$M^n\mathcal B_{d,\delta}$, noting that $M^{n-l}(p_{l,M}+\mathcal B_{d,\delta})$ equals
$(q_{l,M}+M^{-l})\,M^n\mathcal B_{d,\delta}$, and using
$\sum_{l\ge0}M^{-l}\le2$ yields
\[
   q_{n,M}\le C\Big(1+\sum_{l=0}^{n-1}q_{l,M}\Big).
\]
If $b_{n,M}:=1+\sum_{l=0}^n q_{l,M}$, then the base case of~\eqref{eq:prec} gives
$b_{0,M}=1+q_{0,M}\le1+c_1$, and $b_{n,M}\le(1+C)b_{n-1,M}$ for $n\ge1$. Hence
$q_{n,M}\le C\,b_{n-1,M}\le C(1+c_1)(1+C)^{n-1}$, and~\eqref{eq:prec-closed} follows with $K:=1+C$ after changing the
universal constant. The two root blocks $x\mapsto(x,0)$ and $(x,s)\mapsto s$ together have at most
$Cd^2$ parameters, and this quantity is absorbed by $\mathcal B_{d,\delta}$.
\end{proof}

\section{Main theorem}\label{sec:main}

\begin{theorem}[Residual neural networks overcome the curse of dimensionality for semilinear heat equations]\label{thm:main}
Let $L\in(0,\infty)$, let $a\in C(\R,\R)$, and let $f\in C(\R,\R)$ satisfy $\Lip(f)\le L$. For
every $d\in\N$ let $g_d\in C(\R^d,\R)$ and let
$u_d\in C([0,T]\times\R^d,\R)\cap C^{1,2}((0,T]\times\R^d,\R)$ be a solution of at most polynomial
growing solution of~\eqref{eq:pde} in the sense of~\eqref{eq:reg-class}. Assume that there exist
$B\in(0,\infty)$ and $p\in[1,\infty)$ such that
\[
   \abs{f(0)}\le B,\qquad
   \abs{g_d(x)}\le B(1+\norm{x})^p
   \qquad(d\in\N,\ x\in\R^d).
\]
Then the following hold.
\begin{enumerate}[label=\textup{(\roman*)},leftmargin=2.4em,itemsep=3pt,topsep=3pt]
\item Assume that \Cref{set:nn} holds with the constants
$L,B,p$ above and with cost exponent $\kappa$. Then
there exist $\eta\in(0,\infty)$ and a family of ResNets
$(\Psi_{d,\varepsilon})_{d\in\N,\,\varepsilon\in(0,1]}\subseteq\bbN$ such that, for all
$d\in\N$ and $\varepsilon\in(0,1]$,
\begin{equation}\label{eq:main}
\begin{gathered}
  \Real_a(\Psi_{d,\varepsilon})\in C(\R^d,\R),\qquad
  \Param(\Psi_{d,\varepsilon})\le \eta d^\eta\varepsilon^{-\eta},\\[2pt]
  \Big[\int_{[0,1]^d}\abs{u_d(T,x)-(\Real_a\Psi_{d,\varepsilon})(x)}^2\,dx\Big]^{1/2}
  \le\varepsilon.
\end{gathered}
\end{equation}
\item Assume instead that $a$ is an admissible sigmoidal activation in the sense of
\Cref{lem:1d-approx}, that $f$ is a globally Lipschitz truncation, and that $(g_d)_{d\in\N}$ has
the ridge-sum form~\eqref{eq:ridge-form}--\eqref{eq:ridge-data} of \Cref{lem:ridge-surrogate}.
Then the conclusions of part~\textup{(i)} hold and, for every $\xi>0$, the networks may be chosen
so that
\begin{equation}\label{eq:main-sharp}
  \Param(\Psi_{d,\varepsilon})
  \le C_\xi\,
  d^{\,4+\xi}\,
  \varepsilon^{-(3+\xi)}
  \qquad(d\in\N,\ \varepsilon\in(0,1]),
\end{equation}
where $C_\xi$ depends only on $\xi$ and on the dimension-independent constants in the hypotheses,
and not on $d$, $\varepsilon$, or the direction vectors $\alpha^d_j$.
\end{enumerate}
\end{theorem}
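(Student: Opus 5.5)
The proof combines three ingredients: the MLP stability bound (\Cref{cor:mlp-stab}), the exact ResNet representation of one MLP sample (\Cref{prop:repr}), and the parameter recursion (\Cref{lem:prec}). The sample is selected by a probabilistic existence argument.

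\textbf{Step 1: data and error bound.} Fix $d$ and $\varepsilon$. Take $(g_1,f_1)=(g_d,f)$ and $(g_2,f_2)=(\cR_a\mathfrak g_{d,\delta},\cR_a\mathfrak f_\delta)$, with the enlarged constants $L,B$ of \Cref{rem:setNN-content}, so that \eqref{eq:data-bounds} holds uniformly in $\delta$. Choose $q:=\max\{2,\kappa\}$. Then \eqref{eq:nn-acc} gives \eqref{eq:data-pert} with
\[
\Delta:=\min\{1,\,c\,C_g\delta d^\kappa\}.
\]
Indeed, $d^\kappa(1+\norm x^\kappa)\le 2d^\kappa(1+\norm x)^{pq}$, and the constant-in-$v$ error $\delta$ for $f$ is dominated in the same way.

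By \eqref{eq:reversal}, $v_1(0,\cdot)=u_d(T,\cdot)$. Set $M=N$ and integrate the square of \eqref{eq:mlp-stab} over $x\in[0,1]^d$, using Fubini. Since $(\E\norm{\mathbf W_T}^{pq})^{1/(pq)}\le c_{p,q}\sqrt{Td}$ and $\norm x\le\sqrt d$ on the cube,
\[
\E\int_{[0,1]^d}\abs{U^{2,0}_{N,N}(0,x)-u_d(T,x)}^2dx\le \big[C_0\,(\Delta+\epsilon_N)\,d^{pq/2}\big]^2,
\qquad \epsilon_N:=\frac{e^{N/2}(1+2LT)^N}{N^{N/2}}.
\]
Here $C_0$ does not depend on $d$ or $\varepsilon$. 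Because the expectation satisfies this bound, there is an $\omega$ for which the integral itself does; measurability follows from the continuity of the estimator in its random inputs.

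\textbf{Step 2: choosing the parameters.} Let $\varepsilon':=\varepsilon/(2C_0d^{pq/2})$. Apply \Cref{lem:mlp-cost} with $A:=K$, the constant of \Cref{lem:prec}, and accuracy $\varepsilon'$. This gives $N\ge2$ with $\epsilon_N\le\varepsilon'$ and $(KN)^N\le C_{\xi,K}(\varepsilon')^{-(2+\xi)}$. Then choose $\delta$ so that $\Delta\le\varepsilon'$, i.e.\ $\delta\asymp\varepsilon' d^{-\kappa}$.

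For the fixed $\omega$, \Cref{prop:repr} gives $\Psi_{d,\varepsilon}:=\Psi^\omega_{N,N,\delta}$, whose realization equals the sample exactly and is continuous. \Cref{lem:prec} with $\mathcal B=\widehat B_{d,\delta}$ and \eqref{eq:nn-cost} yield
\[
\Param(\Psi_{d,\varepsilon})\le c_1\big(dC_gd^\kappa\delta^{-\kappa}+c_0d^2\big)\,C_{\xi,K}(\varepsilon')^{-(2+\xi)}.
\]
This is polynomial in $d$ and $\varepsilon^{-1}$, which gives some $\eta$ and proves part (i).

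\textbf{Step 3: part (ii), tracking exponents.} \Cref{cor:trunc-surrogate} and \Cref{lem:ridge-surrogate} provide uniform sup-norm errors, so $\kappa=1$ and the $d^\kappa$ factor in \eqref{eq:nn-acc} can be dropped, giving $\Delta\asymp\delta$. Moreover $g_d$ is bounded uniformly in $d$, so \eqref{eq:data-bounds} holds with growth exponent $p$ effectively $0$. Together with $\abs{v}^q$-type bounds, this suggests the Step 1 error can be handled with a dimension factor no worse than $d^{1/2}$, or $d^{q/2}$ in the worst case.

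With the one-hidden-layer sharp cost $B_{d,\delta}=\cP(\mathfrak g)+d\cP(\mathfrak f)+c_0d^2\lesssim d\delta^{-1}+d^2$, the exponent count is
\[
d\,\delta^{-1}\cdot(\varepsilon')^{-(2+\xi)}\ \text{with}\ \delta\asymp\varepsilon',\qquad
d^2\cdot(\varepsilon')^{-(2+\xi)}.
\]
Here $\varepsilon'\asymp\varepsilon/d^{\gamma}$, where $\gamma$ is the dimensional exponent of the stability bound. The target $d^{4+\xi}\varepsilon^{-(3+\xi)}$ then requires $1+3\gamma\le4$ and $2+2\gamma\le4$, i.e.\ $\gamma\le1$, after rescaling $\xi$.

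\textbf{Main obstacle.} The main difficulty is getting $\gamma\le1$ in \eqref{eq:mlp-stab}. Its factor $(1+\norm x+(\E\norm{\mathbf W_T}^{pq})^{1/(pq)})^{pq}$ must be shown to be $O(d)$, with $p$ and $q$ chosen minimally given bounded data.

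I would first take $q=2$ and $p=1$ formally. This makes the factor $O((\sqrt d)^2)=O(d)$, so $\gamma=1$.

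I would then check that the perturbation bound \eqref{eq:data-pert} holds with $\Delta\asymp\delta$ and $pq=2$; it does, because the surrogate errors are uniform constants bounded by $\delta$. Finally, I would verify that the uniform Lipschitz and value bounds keep $C_0$ independent of the direction vectors $\alpha_j^d$.
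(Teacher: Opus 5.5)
Your proposal is correct and follows essentially the same route as the paper's proof. For part (i) you combine \Cref{cor:mlp-stab} with $M=N$, \Cref{lem:mlp-cost} applied with $A=K$ at the rescaled accuracy $\varepsilon/(2C_0d^{pq/2})$ (the paper's $\rho$), a Tonelli-type selection of $\omega$, and \Cref{prop:repr} with \Cref{lem:prec} using $\widehat B_{d,\delta}$. For part (ii) you choose $p=1$, $q=2$ (so the dimensional exponent is $C_\Pi=1$), $\delta\asymp\rho$, and the sharp cost $B_{d,\delta}\lesssim d\delta^{-1}+d^2$, which gives $d^{4+\xi}\varepsilon^{-(3+\xi)}$. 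The only cosmetic differences are your $q=\max\{2,\kappa\}$ in place of the paper's $\max\{2,\kappa/p\}$ (both ensure $pq\ge\kappa$) and the speculative $d^{1/2}$ remark in Step 3, which is not needed and which the paper does not use.
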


\begin{proof}[Proof of \Cref{thm:main}]
Throughout this proof let $d\in\N$, $\varepsilon\in(0,1]$, and $\xi\in(0,\infty)$. For
part~\textup{(i)} it is sufficient to choose, for example, $\xi=1$, whereas for
part~\textup{(ii)} the real number $\xi$ is arbitrary. We denote by $C,C',\dots$ finite
constants which may depend on $T,L,B,p,\kappa,C_g,L_f$, on the constants of
\Cref{lem:prec}, and on $\xi$, but not on $d$, $\varepsilon$, $\rho$, $\delta$, $N$, or $M$. Enlarge $L$ to $\max\{L,L_f\}$ and $B$ if necessary, as explained in
\Cref{rem:setNN-content}, so that the true and surrogate data satisfy the same bounds in
\Cref{set:mlp}, and let $q:=\max\{2,\kappa/p\}$, so that $q\ge2$ and $pq\ge\kappa$; \Cref{set:mlp}
is instantiated with this moment exponent. Let
\[
  \Pi_d(x):=\Big(1+\norm x+\big(\E[\norm{\mathbf W_T}^{pq}]\big)^{1/(pq)}\Big)^{pq}.
\]
Let $C_\Pi\in[1,\infty)$ be an exponent for which there exists $K_\Pi\in[1,\infty)$,
independently of $d$ and $\varepsilon$, such that
\begin{equation}\label{eq:KPi}
  \big(e^{LT}(T+1)\big)^{q+1}(B^q+1)
  \Big(\int_{[0,1]^d}\Pi_d(x)^2\,dx\Big)^{1/2}
  \le K_\Pi d^{C_\Pi}.
\end{equation}
The Gaussian moment estimate
$(\E[\norm{\mathbf W_T}^{pq}])^{1/(pq)}\le C_{p,q,T}\sqrt d$, together with
$\norm{x}\le\sqrt d$ for $x\in[0,1]^d$, shows, in particular, that $C_\Pi=pq/2$ is admissible.

First, we choose the surrogate and MLP approximation accuracies.
Enlarging $C_g$ if necessary, assume $C_g\ge1$. By~\eqref{eq:nn-acc}, $pq\ge\kappa$, and
$1+\norm x^\kappa\le2(1+\norm x)^{pq}$, the surrogate pair lies within
\begin{equation}\label{eq:Delta-pert}
   \Delta_{\mathrm{pert}}:=2C_gd^\kappa\delta
\end{equation}
of $(g_d,f)$ in the gauge~\eqref{eq:data-pert}. Let
\begin{equation}\label{eq:rho-delta}
  \rho:=\varepsilon(4K_\Pi d^{C_\Pi})^{-1},
  \qquad
  \delta:=\rho(2C_gd^\kappa)^{-1}
         =\varepsilon(8C_gK_\Pi d^{C_\Pi+\kappa})^{-1}.
\end{equation}
Then $\Delta_{\mathrm{pert}}=\rho$ and $\delta,\rho\in(0,1]$. Let $K$ be the universal constant from
\Cref{lem:prec}. \Cref{lem:mlp-cost}, applied with target $\rho$, parameter $\xi$, and $A=K$,
ensures that there exists $N=N(\rho)\in\N\cap[2,\infty)$ such that
\begin{equation}\label{eq:representation-factor}
  \frac{e^{N/2}(1+2LT)^N}{N^{N/2}}\le\rho,
  \qquad
  (KN)^N\le C_\xi\rho^{-(2+\xi)}.
\end{equation}
We set $M:=N$. In particular,
\begin{equation}\label{eq:representation-poly}
  (KM)^N\le C d^{C_\Pi(2+\xi)}\varepsilon^{-(2+\xi)}.
\end{equation}
Moreover,
\begin{equation}\label{eq:delta-lb}
  \delta\ge C^{-1}\varepsilon d^{-(C_\Pi+\kappa)}.
\end{equation}

Next, we establish the mean-square error estimate.
Let $U^{0,\delta}_{N,M}$ be the MLP estimator built from the surrogate data, as in
\Cref{prop:repr}; when \Cref{set:mlp} is instantiated with these data, this is the estimator
$U^{2,0}_{N,M}$. \Cref{cor:mlp-stab}, applied with perturbation level
$\Delta_{\mathrm{pert}}=\rho$, the identification $v_1(0,x)=u_d(T,x)$ from~\eqref{eq:reversal}
(the theorem hypothesizes $u_d$ in the class~\eqref{eq:reg-class}), and
\eqref{eq:representation-factor} ensure,
for every $x\in\R^d$, that
\[
  \big(\E[\abs{U^{0,\delta}_{N,M}(0,x)-u_d(T,x)}^2]\big)^{1/2}
  \le 2\rho\big(e^{LT}(T+1)\big)^{q+1}(B^q+1)\Pi_d(x).
\]
After squaring and integrating over $[0,1]^d$, \eqref{eq:KPi} and the definition of $\rho$ show that
\begin{equation}\label{eq:mean-square-root}
  \int_{[0,1]^d}\E[\abs{U^{0,\delta}_{N,M}(0,x)-u_d(T,x)}^2]\,dx
  \le\frac{\varepsilon^2}{4}<\varepsilon^2.
\end{equation}
The map $(x,\omega)\mapsto U^{0,\delta}_{N,M}(0,x)(\omega)$ is jointly measurable. Indeed, this
follows by induction in the finite recursion~\eqref{eq:mlp} from the continuity of the surrogate
realizations and the joint measurability of the Brownian motions and random times. Hence Tonelli's
theorem gives
\[
  \E\!\left[
    \int_{[0,1]^d}\abs{U^{0,\delta}_{N,M}(0,x)-u_d(T,x)}^2\,dx
  \right]
  =
  \int_{[0,1]^d}\E[\abs{U^{0,\delta}_{N,M}(0,x)-u_d(T,x)}^2]\,dx.
\]
Consequently, there exists $\omega^\ast\in\Omega$ such that
\begin{equation}\label{eq:goodomega}
  \int_{[0,1]^d}
  \abs{U^{0,\delta}_{N,M}(0,x)(\omega^\ast)-u_d(T,x)}^2\,dx<\varepsilon^2.
\end{equation}

We now represent the selected realization by a ResNet. \Cref{prop:repr} establishes the existence of a ResNet
$\Psi_{d,\varepsilon}:=\Psi^{\omega^\ast}_{N,M,\delta}$ whose realization equals the selected MLP
sample. Equation~\eqref{eq:goodomega} gives the error bound in~\eqref{eq:main}.

It remains to estimate the number of parameters.
For general surrogate families, \Cref{lem:prec} and~\eqref{eq:Bdd-hat} give
\[
  \Param(\Psi_{d,\varepsilon})
  \le C\widehat B_{d,\delta}(KM)^N,
  \qquad
  \widehat B_{d,\delta}
  \le C d^{\kappa+1}\delta^{-\kappa}+Cd^2.
\]
Combining this estimate with~\eqref{eq:delta-lb} and~\eqref{eq:representation-poly} yields
\[
  \Param(\Psi_{d,\varepsilon})
  \le C_0d^{\eta_d}\varepsilon^{-\eta_\varepsilon},
\]
where
\[
  \eta_d:=\max\!\big\{\kappa+1+\kappa C_\Pi+\kappa^2+C_\Pi(2+\xi),
                      2+C_\Pi(2+\xi)\big\},
  \qquad
  \eta_\varepsilon:=\kappa+2+\xi,
\]
and $C_0\in(0,\infty)$ is independent of $d$ and $\varepsilon$. For part~\textup{(i)}, let
$\xi=1$ and $\eta:=\max\{C_0,\eta_d,\eta_\varepsilon\}$. This proves the parameter estimate in
\eqref{eq:main}.

Under the hypotheses of part~(ii), \Cref{cor:trunc-surrogate,lem:ridge-surrogate} verify
\Cref{set:nn} with $\kappa=1$ and $p=1$, so part~(i) applies. Every profile satisfies
$\abs{\phi_j(v)}\le\abs{\phi_j(0)}+\mathrm{TV}(\phi_j)\le M_\phi+T_\phi$, so that
$\sup_{x\in\R^d}\abs{g_d(x)}\le J(M_\phi+T_\phi)$ and, by~\eqref{eq:ridge-sup},
$\sup_{x\in\R^d}\abs{(\cR_a\mathfrak g_{d,\delta})(x)}\le J(M_\phi+T_\phi)+1$. We therefore run the
preceding argument with the explicit common constants
\[
\begin{gathered}
  L_*:=\max\{L,L_f\},\qquad B_*:=\max\{B,B_f,J(M_\phi+T_\phi)+1\},\\[2pt]
  \kappa=1,\qquad p=1,\qquad q=\max\{2,\kappa/p\}=2,
\end{gathered}
\]
in place of $L$ and $B$; these depend only on the constants in the hypotheses and not on $d$,
$\varepsilon$, or the direction vectors $\alpha^d_j$. By the Gaussian moment
estimate following~\eqref{eq:KPi}, the exponent $C_\Pi=pq/2=1$ is then admissible, and we fix this
choice, with $K_\Pi$ depending only on $T$, $L_*$, and $B_*$. \Cref{lem:ridge-surrogate} supplies the uniform error and
linear cost bounds in~\eqref{eq:ridge-bounds}. By the first estimate
in~\eqref{eq:ridge-bounds} and the $f$-part of~\eqref{eq:nn-acc}, both surrogate errors are
bounded by $\bar\delta$ uniformly, and the gauge weight $(1+\norm x)^{pq}+\abs v^q$
in~\eqref{eq:data-pert} is at least $1$. Hence the surrogate pair has perturbation level
\[
  \bar\Delta_{\mathrm{pert}}:=\bar\delta.
\]
Choose
\[
  \bar\delta:=\rho=\frac{\varepsilon}{4K_\Pi d^{C_\Pi}}.
\]
Then $\bar\Delta_{\mathrm{pert}}=\rho$ and $\bar\delta\in(0,1]$, so the preceding MLP error
estimate, selection argument, and representation argument apply without change. Furthermore,
\Cref{cor:trunc-surrogate}, \eqref{eq:ridge-bounds}, and~\eqref{eq:Bdd} give
\[
\begin{aligned}
  B_{d,\bar\delta}
  &\le C\bigl(d\bar\delta^{-1}+d^2\bigr)
  \le C\bigl(d^{1+C_\Pi}\varepsilon^{-1}+d^2\bigr)
  \le C d^{1+C_\Pi}\varepsilon^{-1}.
\end{aligned}
\]
In the last step we used $C_\Pi\ge1$, $d\ge1$, and $\varepsilon\le1$.
Multiplying this estimate by~\eqref{eq:representation-poly} yields
\[
  \Param(\Psi_{d,\varepsilon})
  \le C_\xi
  d^{1+C_\Pi(3+\xi)}
  \varepsilon^{-(3+\xi)}
  = C_\xi\,
  d^{4+\xi}
  \varepsilon^{-(3+\xi)},
\]
since $C_\Pi=1$. This proves~\eqref{eq:main-sharp}.
\end{proof}

Since $[0,1]^d$ has Lebesgue measure one, H\"older's inequality shows that the networks of
\Cref{thm:main} also satisfy
$[\int_{[0,1]^d}\abs{u_d(T,x)-(\Real_a\Psi_{d,\varepsilon})(x)}^r\,dx]^{1/r}\le\varepsilon$ for
every $r\in(0,2]$.

\section{Consequences and examples}\label{sec:advantages}

\begin{remark}[An Allen--Cahn-type example]\label{rem:allen-cahn-example}
Let $a$ be an admissible sigmoidal activation  in the sense of
\Cref{lem:1d-approx}, let
\[
  g_d(x)=\tanh(\langle\alpha^d,x\rangle),
  \qquad \alpha^d\in\R^d\ \text{arbitrary},
\]
and define the globally Lipschitz truncation
\[
  \widetilde f(r)=
  \begin{cases}
    r-r^3, & \abs{r}\le 1,\\
    0,     & \abs{r}>1.
  \end{cases}
\]
Then \Cref{lem:ridge-surrogate} applies with $J=1$, $L_\phi=1$, $M_\phi=0$, and
$T_\phi=2$. Since $g_d$ takes values in $[-1,1]$ and the constants $\pm1$ are solutions
of~\eqref{eq:pde} because $\widetilde f(\pm1)=0$, the parabolic
comparison principle implies that the corresponding solution remains in this interval.
Consequently, $\widetilde f$ agrees with the classical Allen--Cahn reaction $r\mapsto r-r^3$ on
the range of the solution, and the truncation level is independent of $d$.

\Cref{thm:main}(ii) therefore yields the bound~\eqref{eq:main-sharp} for every $\xi>0$.
\end{remark}

\begin{remark}[General measures]
With $p$, $q$, $\Pi_d$, and $C_\Pi$ as in the proof of \Cref{thm:main}, the Lebesgue measure on
$[0,1]^d$ in~\eqref{eq:main} may be replaced by Borel probability measures $\nu_d$ with $\int_{\R^d}(1+\norm{x})^{2pq}\,\nu_d(dx)\le C_0d^\gamma$ for some
$C_0\in(0,\infty)$ and $\gamma\in[0,\infty)$, the $L^2([0,1]^d)$-error being replaced by the
$L^2(\nu_d)$-error: since the Gaussian moment term in $\Pi_d$ is of order $d^{1/2}$, one has
$\int_{\R^d}\Pi_d(x)^2\,\nu_d(dx)\le Cd^{\max\{\gamma,pq\}}$, and the proof applies with
$C_\Pi=\frac12\max\{\gamma,pq\}$. In particular, the exponents of part~(ii) persist for
$\gamma\le2$.
\end{remark}

\section{Conclusion}\label{sec:conclusion}

\Cref{thm:main} establishes a deterministic, curse-of-dimensionality-free ResNet expressivity
result for semilinear heat equations whose data admit the network surrogates of \Cref{set:nn};
for admissible sigmoidal activations, globally Lipschitz truncations, and the ridge-sum data of
\Cref{lem:ridge-surrogate}, it yields the bound $C_\xi d^{4+\xi}\varepsilon^{-(3+\xi)}$ for every
$\xi>0$. The key ingredient is the accumulator construction of \Cref{prop:repr}, which represents
one realization of an MLP estimator by concatenating residual updates, so that parameter counts
add and no identity representation of the activation is needed. Extensions to state-dependent
coefficients, gradient-dependent nonlinearities, and space--time approximations, as well as
sharper exponents and constructive training procedures, remain open.

\appendix

\section{Proofs of the quantitative MLP inputs}\label{app:mlp-proofs}

In this appendix we prove \Cref{lem:apriori,lem:pde-stab,cor:mlp-stab,lem:mlp-cost} and record
the constants used in \Cref{thm:main}.

\begin{proof}[Proof of \Cref{lem:apriori}]
Fix $i\in\{1,2\}$ and $x\in\R^d$, write $v=v_i$, $g=g_i$, $f=f_i$, and
$\norm{Z}_{L^q}:=(\E[\abs{Z}^q])^{1/q}$ for a random variable $Z$, and let
\begin{equation}\label{eq:apriori-PhiR}
  \Phi(t):=\sup_{s\in[t,T]}\norm{v(s,x+\mathbf W_s)}_{L^q},\qquad
  R:=\sup_{s\in[0,T]}\big(\E[(1+\norm{x+\mathbf W_s})^{pq}]\big)^{1/q},
\end{equation}
so that the assertion reads $\Phi(0)\le e^{LT}(T+1)BR$. The Gaussian moments of $\mathbf W$ and the
polynomial growth of $v$ give $R<\infty$ and $\Phi(0)<\infty$, and $R\ge1$ since
$1+\norm{x+\mathbf W_s}\ge1$; moreover, $\Phi$ is
nonincreasing, hence measurable, with $\int_0^T\Phi(r)\,dr\le T\Phi(0)<\infty$.

Fix $t\in[0,T]$. Evaluating~\eqref{eq:sfp} at the random point $x+\mathbf W_t$ gives,
$\PR$-almost surely,
\begin{equation}\label{eq:apriori-sfp-shift}
  v(t,x+\mathbf W_t)
  =\E\Big[g\big(x+\mathbf W_t+\widetilde{\mathbf W}_{T-t}\big)
    +\int_t^T f\big(v(r,x+\mathbf W_t+\widetilde{\mathbf W}_{r-t})\big)\,dr\,\Big|\,\mathbf W_t\Big],
\end{equation}
where $\widetilde{\mathbf W}$ is an independent copy of the Brownian motion and the conditional
expectation is taken over $\widetilde{\mathbf W}$ only; the patched process
$(\mathbf W_t+\widetilde{\mathbf W}_{r-t})_{r\in[t,T]}$ has the same law as
$(\mathbf W_r)_{r\in[t,T]}$. Taking $L^q$-norms in~\eqref{eq:apriori-sfp-shift} and using
conditional Jensen's inequality $\|\E[Z\mid\mathbf W_t]\|_{L^q}\le\|Z\|_{L^q}$, this law identity,
the triangle inequality, and Minkowski's integral inequality yield
\begin{equation}\label{eq:apriori-split}
  \norm{v(t,x+\mathbf W_t)}_{L^q}
  \le \big\|\,g(x+\mathbf W_T)\,\big\|_{L^q}
     +\int_t^T\norm{f\big(v(r,x+\mathbf W_r)\big)}_{L^q}\,dr.
\end{equation}
By the growth bound $\abs{g(y)}\le B(1+\norm y)^p$ and the definition of $R$,
\begin{equation}\label{eq:apriori-g}
  \norm{g(x+\mathbf W_T)}_{L^q}
  \le B\big(\E\big[(1+\norm{x+\mathbf W_T})^{pq}\big]\big)^{1/q}
  \le B\,R,
\end{equation}
and since $\abs{f(v)}\le\abs{f(0)}+L\abs{v}\le B+L\abs{v}$, for every $r\in[t,T]$,
\begin{equation}\label{eq:apriori-f}
  \norm{f\big(v(r,x+\mathbf W_r)\big)}_{L^q}\le B+L\,\Phi(r).
\end{equation}
Inserting~\eqref{eq:apriori-g} and~\eqref{eq:apriori-f} into~\eqref{eq:apriori-split} and using
$R\ge1$ gives
\begin{equation}\label{eq:apriori-pre-gronwall}
  \norm{v(t,x+\mathbf W_t)}_{L^q}
  \le B\,R + BT + L\!\int_t^T\!\Phi(r)\,dr
  \le (T+1)B\,R + L\!\int_t^T\!\Phi(r)\,dr.
\end{equation}
The right-hand side is nondecreasing as $t$ decreases, so taking the supremum over
$t\in[\tau,T]$ gives, for every $\tau\in[0,T]$,
\begin{equation}\label{eq:apriori-Phi-ineq}
  \Phi(\tau)\le (T+1)B\,R + L\!\int_\tau^T\!\Phi(r)\,dr.
\end{equation}
Applying the backward Gr\"onwall inequality to~\eqref{eq:apriori-Phi-ineq}, with $\alpha=(T+1)BR$
and $\tau=0$, yields $\Phi(0)\le e^{LT}(T+1)BR$, which is the assertion.
\end{proof}

\begin{proof}[Proof of \Cref{lem:pde-stab}]
Fix $x\in\R^d$ and let
\begin{equation}\label{eq:pde-stab-Pi}
  \Pi(x):=\big(1+\norm{x}+(\E[\norm{\mathbf W_T}^{pq}])^{1/(pq)}\big)^{pq},
  \qquad
  e(t):=\sup_{s\in[t,T]}\E\big[\,\abs{v_1(s,x+\mathbf W_s)-v_2(s,x+\mathbf W_s)}\,\big].
\end{equation}
By \Cref{lem:apriori} and Jensen's inequality, $e(0)<\infty$, and $e$ is nonincreasing, hence
measurable with $\int_0^Te(s)\,ds\le Te(0)<\infty$. It suffices to prove the stronger bound
$e(0)\le\Delta(e^{LT}(T+1))^{q+1}(B^q+1)\Pi(x)$.

Fix $t\in[0,T]$ and $y\in\R^d$, and let $\widetilde{\mathbf W}$ be an independent copy of
$\mathbf W$. Subtracting the fixed-point equation~\eqref{eq:sfp} for $v_2$ from that for $v_1$ at $(t,y)$ and
adding and subtracting $f_2(v_1)$ inside the integrand gives
\begin{equation}\label{eq:pde-stab-split}
  v_1(t,y)-v_2(t,y)
  =\E\big[(g_1-g_2)(y+\widetilde{\mathbf W}_{T-t})\big]
   +\E\!\int_t^T\!\underbrace{\big[f_1(v_1)-f_2(v_1)\big]}_{\text{data part}}
   +\underbrace{\big[f_2(v_1)-f_2(v_2)\big]}_{\text{Lipschitz part}}\,ds,
\end{equation}
where $f_i(v_j)$ abbreviates $f_i(v_j(s,y+\widetilde{\mathbf W}_{s-t}))$. Let
\begin{equation}\label{eq:pde-stab-Lambda}
  \Lambda:=\sup_{s\in[0,T]}\E\big[(1+\norm{x+\mathbf W_s})^{pq}+\abs{v_1(s,x+\mathbf W_s)}^q\big],
\end{equation}
which is finite by the Gaussian moments of $\mathbf W$ and \Cref{lem:apriori}; see
\eqref{eq:pde-stab-Lambda-bnd} below. The perturbation bound~\eqref{eq:data-pert}, applied at
$z=y+\widetilde{\mathbf W}_{T-t}$, $v=0$ for the boundary term and at
$z=y+\widetilde{\mathbf W}_{s-t}$, $v=v_1(s,z)$ for the data part, gives
\[
\begin{gathered}
  \abs{(g_1-g_2)(y+\widetilde{\mathbf W}_{T-t})}\le\Delta(1+\norm{y+\widetilde{\mathbf W}_{T-t}})^{pq},\\[2pt]
  \abs{f_1(v_1)-f_2(v_1)}\le\Delta\big((1+\norm{y+\widetilde{\mathbf W}_{s-t}})^{pq}+\abs{v_1}^q\big).
\end{gathered}
\]
Conditioning on $\mathbf W_t$ and setting $y=x+\mathbf W_t$, independence and stationarity of the
increments give $x+\mathbf W_t+\widetilde{\mathbf W}_{s-t}\stackrel{d}{=}x+\mathbf W_s$, so that,
after taking expectations, the boundary term in~\eqref{eq:pde-stab-split} is at most
$\Delta\Lambda$ and the data integrand is at most $\Delta\Lambda$ for every $s$. Hence, with
$d(r):=\E[\abs{v_1(r,x+\mathbf W_r)-v_2(r,x+\mathbf W_r)}]$, conditioning on $\mathbf W_r$,
applying~\eqref{eq:pde-stab-split} with initial time $r$ and $y=x+\mathbf W_r$, and using
$\Lip(f_2)\le L$ yields
$d(r)\le[1+(T-r)]\Delta\Lambda+L\int_r^Td(s)\,ds\le(T+1)\Delta\Lambda+L\int_r^Td(s)\,ds$ for
every $r\in[0,T]$.
Since $d\le e$ and $\int_r^Te\le\int_t^Te$ for $r\ge t$, taking the supremum over $r\in[t,T]$
gives
\begin{equation}\label{eq:pde-stab-gronwall}
  e(t)\le (T+1)\,\Delta\,\Lambda + L\!\int_t^T e(s)\,ds
  \qquad(t\in[0,T]),
\end{equation}
and the backward Gr\"onwall inequality of the proof of \Cref{lem:apriori} yields
\begin{equation}\label{eq:pde-stab-e0}
  e(0)\le (T+1)\,\Delta\,\Lambda\,e^{LT}.
\end{equation}
To bound $\Lambda$, note that $\E[(1+\norm{x+\mathbf W_s})^{pq}]\le\Pi(x)$ for every
$s\in[0,T]$, by the triangle inequality, Minkowski's inequality in $L^{pq}(\Omega,\PR)$, and
$\E[\norm{\mathbf W_s}^{pq}]\le\E[\norm{\mathbf W_T}^{pq}]$, while \Cref{lem:apriori} applied to
$v_1$ gives $\E[\abs{v_1(s,x+\mathbf W_s)}^q]\le(e^{LT}(T+1)B)^q\,\Pi(x)$ for every $s\in[0,T]$;
hence
\begin{equation}\label{eq:pde-stab-Lambda-bnd}
  \Lambda\le\big((e^{LT}(T+1)B)^q+1\big)\,\Pi(x).
\end{equation}
Inserting~\eqref{eq:pde-stab-Lambda-bnd} into~\eqref{eq:pde-stab-e0} and using
$(e^{LT}(T+1)B)^q+1\le(e^{LT}(T+1))^{q}(B^q+1)$, which holds because $e^{LT}(T+1)\ge1$, yields
the claimed bound.
\end{proof}

\begin{proof}[Proof of \Cref{cor:mlp-stab}]
Fix $x\in\R^d$, write $U:=U^{2,0}_{N,M}(0,x)$, and let $\Pi(x)$ be as in~\eqref{eq:pde-stab-Pi}.
Inserting the deterministic number $v_2(0,x)$ and using the triangle inequality in
$L^2(\Omega,\PR)$ gives
\begin{equation}\label{eq:mlp-stab-triangle}
  \big(\E[\abs{U-v_1(0,x)}^2]\big)^{1/2}
  \le \underbrace{\big(\E[\abs{U-v_2(0,x)}^2]\big)^{1/2}}_{\text{MLP approximation error}}
  +\underbrace{\abs{v_2(0,x)-v_1(0,x)}}_{\text{bias from surrogates}}.
\end{equation}
Since $v_1$ and $v_2$ solve~\eqref{eq:sfp} with data differing by at most $\Delta$ in the sense
of~\eqref{eq:data-pert}, \Cref{lem:pde-stab} at $t=0$, where $\mathbf W_0=0$, gives
\begin{equation}\label{eq:mlp-stab-bias}
  \abs{v_2(0,x)-v_1(0,x)}\le\Delta\,(e^{LT}(T+1))^{q+1}(B^q+1)\,\Pi(x).
\end{equation}
The estimator $U^{2,0}_{N,M}$ is the full-history recursive MLP estimator for the surrogate
solution $v_2$, and the MLP convergence estimate \cite[Theorem~3.5]{HJKNW2018}, in the form of
\cite[Corollary~2.4]{HJKN2020}, controls its full $L^2$-error under the Lipschitz and
$q$th-moment hypotheses of \Cref{set:mlp}:
\begin{equation}\label{eq:mlp-stab-mc}
  \big(\E[\abs{U^{2,0}_{N,M}(0,x)-v_2(0,x)}^2]\big)^{1/2}
  \le \frac{e^{M/2}\,(1+2LT)^{N}}{M^{N/2}}\,(e^{LT}(T+1))^{q+1}(B^q+1)\,\Pi(x).
\end{equation}
In~\eqref{eq:mlp-stab-mc}, the factor $\Pi(x)$ is obtained by bounding the terminal-data and
$f_2(0)$ terms, \Cref{lem:apriori} verifying the required $L^2$-integrability;
in~\eqref{eq:mlp-stab-bias}, it arises from the Gaussian growth term together with the moment
bound of \Cref{lem:apriori} applied to $v_1$; see~\eqref{eq:pde-stab-Lambda-bnd}. Inserting~\eqref{eq:mlp-stab-bias} and~\eqref{eq:mlp-stab-mc}
into~\eqref{eq:mlp-stab-triangle} and collecting the common factor
$(e^{LT}(T+1))^{q+1}(B^q+1)\,\Pi(x)$ gives~\eqref{eq:mlp-stab}. The detailed tracking of the
constants in~\eqref{eq:mlp-stab-mc} is carried out in \cite[Corollary~2.4]{HJKN2020}.
\end{proof}

\begin{proof}[Proof of \Cref{lem:mlp-cost}]
Set $\kappa_0:=e^{1/2}(1+2LT)$ and $E_n:=(\kappa_0/\sqrt n)^n$ for $n\in\N$, so that
\eqref{eq:mlp-target} reads $E_N\le\varepsilon$. Since $E_n\to0$ and
\[
  \frac{n\log(An)}{-\log E_{n-1}}
  =\frac{n\log(An)}{(n-1)\big(\tfrac12\log(n-1)-\log\kappa_0\big)}
  \longrightarrow 2
  \qquad(n\to\infty),
\]
there exists $N_0\in\N\cap[2,\infty)$, depending only on $L,T,A,\xi$, such that
$-\log E_{n-1}>0$ and this quotient is at most $2+\xi$ for every $n>N_0$. Let $N\ge N_0$ be
minimal with $E_N\le\varepsilon$. If $N>N_0$, then $E_{N-1}>\varepsilon$ by minimality, and
hence
\[
  (AN)^N=\exp\big(N\log(AN)\big)\le E_{N-1}^{-(2+\xi)}<\varepsilon^{-(2+\xi)}.
\]
If $N=N_0$, then $(AN)^N=(AN_0)^{N_0}$ is a constant depending only on $L,T,A,\xi$, which is
absorbed into $C_{\xi,A}$. This establishes \eqref{eq:mlp-target}--\eqref{eq:mlp-balance-cost}.
\end{proof}

\section{Proof of the one-dimensional approximation lemma}\label{app:1d-proof}

We carry out the proof of \Cref{lem:1d-approx}. Steps~1--3 are a classical sigmoidal staircase
argument with bandwidth equal to the grid spacing; Step~4 proves the finite-total-variation
assertion by a variation-adapted staircase with a separated-center bandwidth.

\begin{proof}[Proof of \Cref{lem:1d-approx}]
	Since $a$ is nondecreasing, its normalization $\widetilde a$ is nondecreasing with limits $0$ and
	$1$. Consequently,
	\begin{equation}\label{eq:a-normalized}
		0\le\widetilde a\le1,\qquad \widetilde a'\ge0,\qquad
		\norm{\widetilde a'}_{L^1(\R)}=\mathrm{TV}(\widetilde a)=1.
	\end{equation}
	
	\smallskip
	\emph{Step 1: The staircase target.}
	Fix $R\in[1,\infty)$ and $n\in\N$, and set $\eta:=2R/n$. For $k\in\{0,1,\dots,n\}$ put
	$v_k:=-R+k\eta$. Define the
	\emph{midpoint staircase} $S_n\colon\R\to\R$ by
	\begin{equation}\label{eq:staircase}
		S_n(v):=\phi(v_0)+\sum_{k=1}^{n}\Delta_k\,\mathbf{1}_{(c_k,\infty)}(v),\qquad
		\Delta_k:=\phi(v_k)-\phi(v_{k-1}),\quad c_k:=\tfrac{v_{k-1}+v_k}{2}.
	\end{equation}
	By construction $S_n(v)=\phi(v_k)$ for $v\in(c_k,c_{k+1})$ with $c_0:=-\infty$, $c_{n+1}:=+\infty$,
	and the staircase is constant outside its central region: $S_n(v)=\phi(v_0)$ for $v\le c_1$ and
	$S_n(v)=\phi(v_n)$ for $v>c_n$. At each jump point $c_k$, one has
	$S_n(c_k)=\phi(v_{k-1})$ and $\abs{c_k-v_{k-1}}=R/n$. Since $\phi$ is
	$\Lip(\phi)$-Lipschitz and $\abs{v-v_k}\le R/n$ for $v\in[-R,R]\cap(c_k,c_{k+1})$ and every
	$k\in\{0,\dots,n\}$,
	\begin{equation}\label{eq:staircase-err}
		\sup_{v\in[-R,R]}\abs{\phi(v)-S_n(v)}\le \Lip(\phi)\frac{R}{n},
		\qquad
		\abs{\Delta_k}\le\Lip(\phi)\eta.
	\end{equation}
	If $\phi$ is constant on $(-\infty,-R]$ and on $[R,\infty)$, then the same estimate holds with
	$[-R,R]$ replaced by $\R$, because $S_n(v)=\phi(v)$ for $v\notin[-R,R]$.
	
	\smallskip
	\emph{Step 2: Sigmoidal smoothing.}
	We use the elementary sampling estimate
	\begin{equation}\label{eq:amalgam}
		\sup_{x\in\R}\sum_{j\in\Z}\abs{g(x-j)}
		\le\norm{g}_{L^1(\R)}+\mathrm{TV}(g)
	\end{equation}
	for every integrable function $g\colon\R\to\R$ whose given pointwise representative has bounded
	variation. Indeed, by
	periodicity of the sum it suffices to take $x\in[0,1)$. For every $m\in\Z$,
	\[
	\abs{g(x+m)}
	\le\int_m^{m+1}\abs{g(r)}\,dr+\mathrm{TV}\bigl(g;[m,m+1)\bigr),
	\]
	and summing over $m$ proves~\eqref{eq:amalgam}.
	
	Let $H:=\mathbf 1_{(0,\infty)}$ (so $H(0)=0$) and $\psi_a:=\abs{\widetilde a-H}$. Since the absolute-value map is
	$1$-Lipschitz, \eqref{eq:a-normalized} gives
	\begin{equation}\label{eq:psi-variation}
		\norm{\psi_a}_{L^1(\R)}=\widetilde\Lambda_a,
		\qquad
		\mathrm{TV}(\psi_a)
		\le\mathrm{TV}(\widetilde a-H)
		\le\mathrm{TV}(\widetilde a)+\mathrm{TV}(H)=2.
	\end{equation}
	
	Set the bandwidth $h:=\eta=2R/n$ and define
	\begin{equation}\label{eq:smoothed-stair}
		Q_{n,R}(v):=\phi(v_0)+\sum_{k=1}^{n}\Delta_k\,
		\widetilde a\!\left(\frac{v-c_k}{\eta}\right).
	\end{equation}
	Since $c_k=c_1+(k-1)\eta$, writing $x:=(v-c_1)/\eta$ and applying
	\eqref{eq:amalgam} to $\psi_a$ yield, for every $v\in\R$,
	\[
	\begin{aligned}
		\abs{Q_{n,R}(v)-S_n(v)}
		&\le\sum_{k=1}^n\abs{\Delta_k}\,\psi_a(x-(k-1))\\
		&\le\Lip(\phi)\eta\sum_{j\in\Z}\psi_a(x-j)
		\le\Lip(\phi)\eta\bigl(\widetilde\Lambda_a+2\bigr).
	\end{aligned}
	\]
	Together with~\eqref{eq:staircase-err}, this proves
	\begin{equation}\label{eq:smoothed-error}
		\sup_{v\in[-R,R]}\abs{\phi(v)-Q_{n,R}(v)}
		\le\bigl(5+2\widetilde\Lambda_a\bigr)\Lip(\phi)\frac{R}{n}.
	\end{equation}
	If $\phi$ is constant on the two exterior intervals appearing in the lemma, the global version of
	\eqref{eq:staircase-err} established in Step~1 and the pointwise estimate above give the same bound
	with $[-R,R]$ replaced by $\R$.
	
	The function $Q_{n,R}$ is continuously differentiable. Applying \eqref{eq:amalgam} to the
	continuous representative of $\widetilde a'$ gives
	\[
	\begin{aligned}
		\abs{Q_{n,R}'(v)}
		&\le\sum_{k=1}^n\frac{\abs{\Delta_k}}{\eta}
		\abs{\widetilde a'(x-(k-1))}\\
		&\le\Lip(\phi)\sum_{j\in\Z}\abs{\widetilde a'(x-j)}
		\le\Lip(\phi)\bigl(1+\mathrm{TV}(\widetilde a')\bigr).
	\end{aligned}
	\]
	Consequently,
	\begin{equation}\label{eq:smoothed-lip}
		\Lip(Q_{n,R})\le\bigl(1+\mathrm{TV}(\widetilde a')\bigr)\Lip(\phi).
	\end{equation}
	
	\smallskip
	\emph{Step 3: Realization as an FNN.}
	Define $\phi_{n,R}:=\big((W_1,B_1),(W_2,B_2)\big)\in\bN$ with $W_1\in\R^{n\times1}$,
	$B_1\in\R^n$, $W_2\in\R^{1\times n}$, $B_2\in\R$ given by
	\begin{equation}\label{eq:1d-entries}
		(W_1)_{k,1}:=\frac{1}{\eta},\qquad
		(B_1)_{k}:=-\frac{c_k}{\eta},\qquad
		(W_2)_{1,k}:=\frac{\Delta_k}{A_+-A_-},\qquad
		B_2:=\phi(v_0)-\frac{A_-}{A_+-A_-}\sum_{j=1}^n\Delta_j,
	\end{equation}
	for $k\in\{1,\dots,n\}$. Since $\widetilde a=(a-A_-)/(A_+-A_-)$, \Cref{def:fnn} gives
	$(\cR_a\phi_{n,R})(v)=\phi(v_0)+\sum_{k=1}^{n}\Delta_k\,\widetilde a((v-c_k)/\eta)=Q_{n,R}(v)$
	for every $v\in\R$, so that $\cD(\phi_{n,R})=(1,n,1)$ and
	$\cP(\phi_{n,R})=2n+(n+1)=3n+1$.
	Finally, \eqref{eq:smoothed-error} and \eqref{eq:smoothed-lip} give
	\eqref{eq:1d-approx-in}--\eqref{eq:1d-lip}, as well as \eqref{eq:1d-approx-global}, with, for
	example,
	\[
	C_a:=5+2\widetilde\Lambda_a+\mathrm{TV}(\widetilde a').
	\]

	\smallskip
	\emph{Step 4: The finite-total-variation assertion.}
	The notation of Steps~1--3 is not used in this step. Let $\phi$ have finite total variation
	$T:=\mathrm{TV}(\phi)$, let $\delta\in(0,1]$, and set $\lambda:=\delta/3$ and
	\[
	\varpi_a(t):=\sup_{\abs r\ge t}\,\abs{\widetilde a(r)-\mathbf 1_{(0,\infty)}(r)}
	\qquad(t\in(0,\infty)),
	\]
	so that $\varpi_a(t)\to0$ as $t\to\infty$ by~\eqref{eq:a-sigmoidal}. Since $\phi$ is continuous
	with finite total variation, the finite limits $\lim_{u\to\pm\infty}\phi(u)$ exist. Let
	$V(v):=\mathrm{TV}(\phi;(-\infty,v])$; by additivity of the total variation,
	$\abs{\phi(v)-\lim_{u\to\infty}\phi(u)}\le\mathrm{TV}(\phi;[v,\infty))=T-V(v)$ for every
	$v\in\R$. If $T\le\lambda$, then $\abs{\phi(v)-\lim_{u\to\infty}\phi(u)}\le\lambda\le\delta$ for
	every $v\in\R$, and the network of architecture $(1,1,1)$ with zero weight matrices, zero hidden
	bias, and output bias $\lim_{u\to\infty}\phi(u)$ satisfies~\eqref{eq:1d-bv}. Assume now
	$T>\lambda$; then $\phi$ is not constant, so $L:=\Lip(\phi)\in(0,\infty)$. The function $V$ is
	nondecreasing with $\lim_{v\to-\infty}V(v)=0$ and $\lim_{v\to\infty}V(v)=T$; moreover, for
	$x<y$, additivity gives $0\le V(y)-V(x)=\mathrm{TV}(\phi;[x,y])\le L\,(y-x)$, so $V$ is
	$L$-Lipschitz and, in particular, continuous. Set $K:=\lceil T/\lambda\rceil\ge2$ and, for
	$k\in\{1,\dots,K-1\}$, let
	\[
	c_k:=\inf\{v\in\R:V(v)\ge k\lambda\}.
	\]
	Since $k\lambda\le(K-1)\lambda<T$ and $V$ is continuous with the stated limits, each $c_k$ is a
	real number with $V(c_k)=k\lambda$, and $c_1<c_2<\dots<c_{K-1}$, since $c_{k+1}\ge c_k$ and
	$V(c_{k+1})=(k+1)\lambda\neq k\lambda=V(c_k)$. Moreover, the Lipschitz bound for $V$ gives, for
	every $k\in\{1,\dots,K-2\}$,
	\begin{equation}\label{eq:gap}
		\lambda=V(c_{k+1})-V(c_k)\le L\,(c_{k+1}-c_k),
		\qquad\text{hence}\qquad
		c_{k+1}-c_k\ge\lambda/L.
	\end{equation}
	Consider the cells $I_0:=(-\infty,c_1]$, $I_k:=(c_k,c_{k+1}]$ for $k\in\{1,\dots,K-2\}$, and
	$I_{K-1}:=(c_{K-1},\infty)$, and the values $s_0:=\phi(c_1)$, $s_k:=\phi(c_{k+1})$ for
	$k\in\{1,\dots,K-2\}$, and $s_{K-1}:=\lim_{u\to\infty}\phi(u)$. The variation of $\phi$ over the
	closure of each cell is at most $\lambda$: it equals $V(c_1)=\lambda$ for $I_0$, equals
	$V(c_{k+1})-V(c_k)=\lambda$ for the interior cells, and is $T-V(c_{K-1})=T-(K-1)\lambda\le\lambda$
	for $I_{K-1}$. For $v\in I_k$ with $k\in\{0,\dots,K-2\}$, both $v$ and the sample point defining
	$s_k$ lie in the closure of $I_k$, so $\abs{\phi(v)-s_k}\le\lambda$; for $v\in I_{K-1}$, the tail
	bound above gives $\abs{\phi(v)-s_{K-1}}\le T-V(v)\le T-V(c_{K-1})\le\lambda$. Hence the
	staircase
	\[
	S(v):=s_0+\sum_{k=1}^{K-1}\Delta_k\,\mathbf 1_{(c_k,\infty)}(v),
	\qquad
	\Delta_k:=s_k-s_{k-1},
	\]
	satisfies $\sup_{v\in\R}\abs{\phi(v)-S(v)}\le\lambda$. Moreover, $\abs{\Delta_k}\le\lambda$ for
	every $k$ by the cellwise variation bounds, while $\sum_{k=1}^{K-1}\abs{\Delta_k}\le T$ follows
	from the definition of the total variation applied to the ordered sample points
	$c_1<\dots<c_{K-1}$, followed by passage to the limit at $+\infty$.

	Since $\varpi_a(t)\to0$ as $t\to\infty$, we may choose $\theta\in(0,1]$, depending only on $a$
	and $T/\lambda$, such that $T\,\varpi_a(1/(2\theta))\le\lambda$. Set $h:=\theta\lambda/L$ and
	\[
	Q(v):=s_0+\sum_{k=1}^{K-1}\Delta_k\,\widetilde a\big((v-c_k)/h\big)
	\qquad(v\in\R).
	\]
	As in Step~3, $Q$ is the realization of an FNN $\phi_\delta$ of architecture $(1,K-1,1)$ with
	first-layer weights $1/h$, first-layer biases $-c_k/h$, output weights $\Delta_k/(A_+-A_-)$, and
	output bias $s_0-\sum_{k=1}^{K-1}\Delta_kA_-/(A_+-A_-)$. We estimate
	$\abs{Q(v)-S(v)}\le\sum_{k=1}^{K-1}\abs{\Delta_k}\,\psi\big((v-c_k)/h\big)$, where
	$\psi:=\abs{\widetilde a-\mathbf 1_{(0,\infty)}}$ satisfies $\psi\le1$ by~\eqref{eq:a-normalized}
	and $\psi(r)\le\varpi_a(\abs r)$ for $r\neq0$. Fix $v\in\R$ and let $k^\ast$ minimize
	$\abs{v-c_k}$ over $k$. By~\eqref{eq:gap}, distinct centers are at least $\lambda/L$ apart, and
	$\abs{v-c_k}\ge\tfrac12\abs{c_k-c_{k^\ast}}$ for $k\neq k^\ast$ by the choice of $k^\ast$; hence
	$\abs{v-c_k}\ge\lambda/(2L)$ and $\abs{v-c_k}/h\ge1/(2\theta)$ for every $k\neq k^\ast$. Consequently,
	\[
	\abs{Q(v)-S(v)}
	\le\abs{\Delta_{k^\ast}}
	+\Big(\sum_{k\neq k^\ast}\abs{\Delta_k}\Big)\varpi_a\big(1/(2\theta)\big)
	\le\lambda+T\,\varpi_a\big(1/(2\theta)\big)
	\le2\lambda.
	\]
	Together with the staircase bound, $\sup_{v\in\R}\abs{\phi(v)-(\cR_a\phi_\delta)(v)}\le3\lambda=\delta$,
	which is~\eqref{eq:1d-bv} with $m=K-1\le\lceil3T/\delta\rceil-1$.
\end{proof}

\section*{Acknowledgements}
This project has been partially supported by DFG – Project-ID 499552394 – SFB 1597.

\section*{Declaration of generative AI and AI-assisted technologies	in the manuscript preparation process}

The key ideas of the statements and the proofs of the main results of
this work are due to the authors. 
During the preparation of this work, the authors used OpenAI's
ChatGPT and the AI assistant Claude (Anthropic) to improve the language and readability of the manuscript, to check the consistency of notation, cross-references, to verify bibliographic details against the cited sources, and to flag possible issues
concerning mathematical
expositions.  All suggestions and proposed revisions were critically
reviewed by the authors. The authors independently verified all
mathematical statements, proofs, and references, made all final
decisions concerning the manuscript, and take full responsibility
for the content of the published article.

\bibliographystyle{acm}
\bibliography{refs}

\end{document}